\newcommand{\A}{\mathbb{A}}
\newcommand{\B}{\mathcal{B}}
\newcommand{\C}{\mathbb{C}}
\newcommand{\CV}{\mathcal{CV}}
\newcommand{\cZ}{\mathcal{Z}}
\newcommand{\F}{\mathbb{F}}
\newcommand{\m}{\mathfrak{m}}
\newcommand{\M}{\mathcal{M}}
\newcommand{\OK}{\mathcal{O}}
\newcommand{\p}{\mathfrak{p}}
\newcommand{\Q}{\mathbb{Q}}
\newcommand{\V}{\mathcal{V}}
\newcommand{\Z}{\mathbb{Z}}
\DeclareMathOperator{\ab}{ab}
\DeclareMathOperator{\Aut}{Aut}
\DeclareMathOperator{\dR}{dR}
\DeclareMathOperator{\End}{End}
\DeclareMathOperator{\et}{et}
\DeclareMathOperator{\Frob}{Frob}
\DeclareMathOperator{\FVS}{FVS}
\DeclareMathOperator{\Gal}{Gal}
\DeclareMathOperator{\GL}{GL}
\DeclareMathOperator{\Hom}{Hom}
\DeclareMathOperator{\HS}{HS}
\DeclareMathOperator{\Jac}{Jac}
\DeclareMathOperator{\num}{num}
\DeclareMathOperator{\ord}{ord}
\DeclareMathOperator{\rat}{rat}
\DeclareMathOperator{\mhom}{hom}
\DeclareMathOperator{\Rep}{Rep}
\DeclareMathOperator{\Res}{Res}
\newtheorem{thm}{Theorem}[section]
\newtheorem{cor}[thm]{Corollary}
\newtheorem{prop}[thm]{Proposition}
\newtheorem{lem}[thm]{Lemma}
\theoremstyle{definition}
\newtheorem{defn}[thm]{Definition}
\newtheorem{question}[thm]{Question}
\theoremstyle{remark}
\newtheorem{rem}[thm]{Remark}
\begin{document}

\title{Chow motives associated to certain algebraic Hecke characters}
\author{Laure Flapan}
\address{Department of Mathematics, Northeastern University, USA}
\email{l.flapan@northeastern.edu}

\author{Jaclyn Lang}
\address{Max Planck Institute for Mathematics, Germany}
\email{jlang@mpim-bonn.mpg.de}

\subjclass[2010]{11G15, 11G40, 14G10, 14C15, 14C30}
\keywords{Chow motive, modularity, complex multiplication, $L$-function, Hecke character}

\begin{abstract}
Shimura and Taniyama proved that if $A$ is a potentially CM abelian variety over a number field $F$ with CM by a field $K$ linearly disjoint from F, then there is an algebraic Hecke character $\lambda_A$ of $FK$ such that $L(A/F,s)=L(\lambda_A,s)$.  We consider a certain converse to their result.  Namely, let $A$ be a potentially CM abelian variety appearing as a factor of the Jacobian of a curve of the form $y^e=\gamma x^f+\delta$. Fix positive integers $a$ and $n$ such that $n/2 < a \leq n$.  Under mild conditions on $e, f, \gamma, \delta$, we construct a Chow motive $M$, defined over $F=\Q(\gamma,\delta)$, such that $L(M/F,s)$ and $L(\lambda_A^a\overline{\lambda}_A^{n-a},s)$ have the same Euler factors outside finitely many primes.  
\end{abstract}

\maketitle


\section{Introduction}
The Langlands philosophy predicts a correspondence between certain automorphic representations and Galois representations. Moreover, the Fontaine-Mazur conjecture and its underlying philosophy specify when these Galois representations are expected to arise from the $\ell$-adic cohomology of a variety or, more generally, a motive. To each of these objects---automorphic representations, Galois representations, or motives---one can attach a natural invariant, called an $L$-function, that is a meromorphic function on some right-half complex plane. In light of these two general conjectures, one can ask: given an automorphic representation $f$, how can one construct a motive $M_f$ yielding an equality of $L$-functions $L(M_f,s)=L(f,s)$ (or at least an equality of all but finitely many Euler factors)?

In this paper, we explore this question in a very special case, namely that of algebraic Hecke characters and CM motives. In 1961, Shimura and Taniyama proved that if $A$ is a abelian variety over a number field $F$ with CM by a field $K$ linearly disjoint from $F$, then there is an algebraic Hecke character $\lambda_A$ of $FK$ such that $L(A/F, s) = L(\lambda_A, s)$ \cite{shimura-taniyama}.  Using more explicit methods, Weil had proved the same result in 1952 for factors of Jacobians of curves of the form
\[
C : y^e = \gamma x^f + \delta,
\]
for $2 \leq e \leq f$ and $\gamma, \delta \in \overline{\Q}^\times$ \cite{Weil}.

In this paper we are concerned with a converse question.  Fix $C$ as above with $(e,f)=1$ as well as a primitive $e$-th (respectively, $f$-th) root of unity $\zeta_e$ (respectively, $\zeta_f$).  Assume $F = \Q(\gamma, \delta)$ is linearly disjoint from $\Q(\zeta_e, \zeta_f)$ and that $C$ has an $F$-rational point.  Let $\lambda$ be the algebraic Hecke character associated to an isotypic CM factor of the Jacobian of $C$.  Fix a positive integer $n$ and another integer $n/2 < a \leq n$.  We explicitly construct a Chow motive over $F$ for the Hecke character $\lambda^n\overline{\lambda}^{n-a}$, where the bar denotes complex conjugation. 

More precisely, we define a group action of $G = (\Z/f\Z \times \Z/e\Z)^{n - 1}$ on the product $C^n$ that depends on the integer $a$.  There is a Chow motive $\tilde{M}$ such that any classical realization of $\tilde{M}$ is given by $G$-invariants of the corresponding realization of $C^n$.  We then decompose $\tilde{M}$ using idempotents coming from the motive $h^1(C)$.  To describe the decomposition, recall that the Jacobian of $C$ is $F$-isogeneous to a product 
\begin{equation}\label{decompose jacobian intro}
\Jac(C) \sim_F \prod_i A_i,
\end{equation} 
where each $A_i$ is an isotypic abelian variety defined over $F$ that obtains CM by a certain cyclotomic field $K_i$ upon base change.  The idempotent $e_i$ that cuts out $A_i$ can be viewed as an idempotent for $h^1(C)$, and thus $E_i = e_i^{\otimes n}$ is an idempotent for $h(C^n)$.  We show that the Chow motive $M_i \coloneqq E_i(\tilde{M})$ behaves very similarly to $A_i$.

For a finite set of primes $S$, an (incomplete) $L$-function $L^{(S)}(*,s)$ is the Euler product of the local $L$-factors in $L(*, s)$ outside of the set $S$.  Let $S_i$ denote the set of primes where the abelian variety $A_i$ has bad reduction. Let $\lambda_i : \A^\times_{FK_i}/FK_i^\times \to \C^\times$ be the algebraic Hecke character associated to $A_i$ by Weil and Shimura-Taniyama; that is, $L(A_i/F, s) = L(\lambda_i, s)$.  Then our main theorem is the following.

\begin{thm}\label{main thm}
Assume $C$ has an $F$-rational point.  Let $n$ be a positive integer and $n/2 < a \leq n$.  Assume $a=n$ if $F$ is not totally real.  For all $i$ in the decomposition \eqref{decompose jacobian intro}, there is an equality of (incomplete) $L$-functions
\[
L^{(S_i)}(M_i/F, s) = L^{(S_i)}(\lambda_i^a\overline{\lambda}_i^{n-a}, s).
\]
\end{thm}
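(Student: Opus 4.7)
\medskip

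\noindent\textbf{Proof proposal.} The plan is to match the two $L$-functions Euler factor by Euler factor at primes of good reduction. For a prime $\p$ of $F$ outside $S_i$ and outside the finitely many primes of bad reduction of $C^n$ (these are contained in $S_i$ after enlarging), both sides are determined by the characteristic polynomial of $\Frob_\p$ on an $\ell$-adic realization (for $\ell$ coprime to $\p$). So the task is to compute the characteristic polynomial of Frobenius on $H_\ell(M_i)$ and identify it with the Euler factor of $\lambda_i^a\overline{\lambda}_i^{n-a}$.

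First I would unfold the construction of $M_i$ on the $\ell$-adic realization. By definition $H_\ell(M_i) = E_i\bigl(H_\ell(\tilde M)\bigr)$, where $H_\ell(\tilde M)$ is cut out of $H_\ell(C^n)$ by the $G$-invariants. By Künneth, $H_\ell(C^n) \cong H_\ell(C)^{\otimes n}$; since $H^0(C)$ and $H^2(C)$ contribute only Tate-type pieces that are killed by $E_i = e_i^{\otimes n}$ (as each $e_i$ is an idempotent for $h^1(C)$), the image of $E_i$ lies inside $H^1_\ell(C)^{\otimes n}$. Decomposing $\Jac(C)\sim_F \prod_j A_j$ and applying $e_i^{\otimes n}$ identifies the output (before taking $G$-invariants) with $H^1_\ell(A_i)^{\otimes n}$. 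Thus
\[
H_\ell(M_i) \;\cong\; \bigl(H^1_\ell(A_i)^{\otimes n}\bigr)^{G},
\]
as a $G_F$-representation.

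Next I would use the CM structure to diagonalize. After base change to $FK_i$, the representation $H^1_\ell(A_i)|_{G_{FK_i}}$ splits as a direct sum of one-dimensional characters $\psi_i$ and $\overline{\psi}_i$ associated to $\lambda_i$ and $\overline{\lambda}_i$ respectively under the Shimura--Taniyama correspondence. The tensor power $H^1_\ell(A_i)^{\otimes n}|_{G_{FK_i}}$ then decomposes into one-dimensional pieces indexed by subsets $T\subseteq\{1,\dots,n\}$, the $T$-piece being $\psi_i^{|T|}\overline{\psi}_i^{n-|T|}$. The action of $G=(\Z/f\Z\times\Z/e\Z)^{n-1}$ comes from the group of $(\zeta_f,\zeta_e)$-type automorphisms of $C^n$; these act through scalars on each Künneth--CM piece via explicit characters, and the defining constraint that produces $G$ (depending on $a$) is arranged precisely so that the trivial character of $G$ occurs on exactly those pieces where $|T|=a$ (and possibly their Galois orbits). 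Taking $G$-invariants therefore isolates a sum of copies of $\psi_i^a\overline{\psi}_i^{n-a}$ running over a set of cardinality $[FK_i:F]$.

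Finally, I would identify this set of summands with the Galois orbit of $\psi_i^a\overline{\psi}_i^{n-a}$ under $\Gal(FK_i/F)$, so that descending from $G_{FK_i}$ to $G_F$ gives $H_\ell(M_i)\cong\Ind_{FK_i/F}\bigl(\psi_i^a\overline{\psi}_i^{n-a}\bigr)$ up to semisimplification. The equality $L^{(S_i)}(M_i/F,s)=L^{(S_i)}(\lambda_i^a\overline{\lambda}_i^{n-a},s)$ then follows from inductivity of $L$-functions together with the Shimura--Taniyama dictionary between $\lambda_i$ and $\psi_i$. The main obstacle is the combinatorial bookkeeping in the penultimate step: one must show that the specific subgroup $G\subset (\Z/f\Z\times\Z/e\Z)^n$ carved out in the construction really does cut out exactly the $|T|=a$ pieces (and nothing else), and that the hypothesis $n/2<a\le n$ together with the assumption that $a=n$ when $F$ is not totally real is what makes this counting come out cleanly---in particular, preventing the character $\psi_i^a\overline{\psi}_i^{n-a}$ from being conflated with its complex conjugate when taking $G$-invariants over $F$ rather than $FK_i$.
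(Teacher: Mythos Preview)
Your overall strategy---identify $H_\ell(M_i)$ with $(H^1_\ell(A_i)^{\otimes n})^G$, diagonalize via CM, and recognize the answer as an induced representation---is reasonable and is morally the shape of the paper's argument, which instead carries everything out in an explicit eigenbasis and computes Frobenius as a generalized permutation matrix. However, your CM decomposition step contains a genuine error that propagates through the rest of the sketch.

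The representation $H^1_\ell(A_i)$ is not two-dimensional in general: $\dim A_i = [K_i:\Q]/2$, so $H^1_\ell(A_i)$ has dimension $[K_i:\Q]$, and over $G_{FK_i}$ it splits as the direct sum of \emph{all} $[K_i:\Q]$ Galois conjugates $\{\psi_i^\tau : \tau\in\Gal(FK_i/F)\}$, not just $\psi_i$ and $\overline{\psi}_i$. (Your description is only correct when $K_i$ is imaginary quadratic.) Consequently the K\"unneth pieces of $H^1_\ell(A_i)^{\otimes n}$ are indexed by $n$-tuples $(\tau_1,\ldots,\tau_n)\in\Gal(FK_i/F)^n$, not by subsets $T\subseteq\{1,\ldots,n\}$, and the $G$-invariance condition does far more than pin down a cardinality. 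Tracking the $\psi_e^*,\psi_f^*$-eigenvalues on each one-dimensional piece (this is exactly the computation the paper does concretely in its Proposition on $G$-invariants of $H^*(C^n,\C)$), one finds that $G$-invariance forces $\tau_1=\cdots=\tau_a$, $\tau_{a+1}=\cdots=\tau_n$, \emph{and} $\tau_{a+1}=\overline{\tau_1}$. Only then do you get one invariant line for each $\tau\in\Gal(FK_i/F)$, carrying the character $(\psi_i^\tau)^a(\overline{\psi_i^\tau})^{n-a}$, and the sum over $\tau$ is $\Ind_{FK_i/F}(\psi_i^a\overline{\psi}_i^{n-a})$ as you want. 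So the argument can be completed along your lines, but the indexing by subsets $T$ and the claim that ``$|T|=a$'' is what $G$-invariance enforces are both wrong and must be replaced by this finer bookkeeping. The hypothesis $n/2<a\le n$ enters already in this combinatorial step (to rule out spurious invariants with the holomorphic and antiholomorphic positions swapped inside $\{1,\ldots,a\}$ or $\{a+1,\ldots,n\}$), whereas the totally-real hypothesis is used separately to relate $Z_b$ to $Z_{-b}$ under complex conjugation when $a\neq n$.
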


There are a few important notes to make about the history of this problem.  First, the case when $C$ is given by $y^2 = x^3 + \delta$ ($\delta \in \Q$) was treated by Cynk and Hulek in \cite{cynk}, and our work is very much inspired by their approach.  Secondly, given a number field $k$ and any algebraic Hecke character $\lambda : \A_k^\times/k^\times \to \C^\times$, there is a standard way to construct a numerical motive $M(\lambda)$ defined over $k$ such that $L^{(S)}(M(\lambda), s) = L^{(S)}(\lambda, s)$ for some finite set of primes $S$ \cite[\S I.4]{Schappacher}.  There are two main advantages to our construction when $\lambda$ comes from one of Weil's curves, which are discussed more precisely in Section \ref{Schappacher comparison}.  First, our construction yields a Chow motive, which carries more information than a numerical motive.  Second, our construction shows that the standard motive descends to a smaller field than that given by the standard construction, in the sense that our motive $M_i$ is defined over $F$ and, when base-changed to $FK_i$, coincides with the standard motives.  Indeed, our theorem gives a positive answer to the following question in the case of algebraic Hecke characters arising from Weil curves.

\begin{question}
Let $\lambda : \A_k^\times/k^\times \to \C^\times$ be an algebraic Hecke character.  Assume there is a subfield $k' \subset k$ such that the standard motive $M(\lambda)$ descends to $k'$.  For positive integers $n, a$ as above, does $M(\lambda^a\overline{\lambda}^{n-a})$ also descend to $k'$?
\end{question}  

The structure of the paper is as follows.  In Section \ref{weil curves} we establish some facts about the curve $C$ and its Jacobian.  The main point is to find a nice basis with respect to which we can compute the Galois action on the \'etale cohomology of $C$, which is done in Proposition \ref{Frob lem}.  Then we introduce $L$-functions and use them to relate the algebraic Hecke characters in question to the matrices describing the Galois action with respect to our chosen basis.  This relationship is recorded in Corollary \ref{equality of setsgen}.

Section \ref{constructing a motive} is devoted to constructing the relevant Chow motives and calculating their $L$-functions.  We start with a brief introduction to the language and notation of motives in Section \ref{motives background}.  The group action of $G$ on $C^n$ is defined at the start of Section \ref{group action} and was inspired by similar constructions in \cite{cynk} and \cite{schreieder}.  The main technical result in the paper is Proposition \ref{two pieces of cohomologygen}, where we compute the Betti realization of the motive $\tilde{M}$ by computing the $G$-invariants of the Betti realization of $C^n$.  This was due to Schreieder \cite{schreieder} in the case when $C$ is of the form $y^2 = x^{2g+1} + 1$ and our calculation is a straightforward generalization of his result.  Theorem \ref{main thm} is proved in Proposition \ref{localLfactor2gen} and Corollary \ref{Lfn equality}.  

Finally, we briefly discuss the relationship between the motives constructed here and other motives and varieties in the literature in Section \ref{literature}.  In particular, we discuss the constructions of motives in \cite{Schappacher} and \cite{Scholl1990}. Moreover, we note in Corollary  \ref{geometric modularity} that our theorem yields modularity results for a class of smooth projective varieties constructed in \cite{schreieder}, which generalizes the modularity results of  \cite{cynk}.

\textbf{Notation:} Throughout the paper we will use $\varphi$ to denote Euler's totient function, $\varphi(n) = \#(\Z/n\Z)^\times$.  For a positive integer $n$, let $\zeta_n$ denote a primitive $n$-th root of unity.  If $X$ is a variety defined over a field $F$ and $K$ is an extension of $F$, we will write $X_{K}$ for the base change $X \times_F K$ of $X$ to $K$.  For a field $k$, we fix an algebraic closure $\overline{k}$ and write $G_k$ for the absolute Galois group $\Gal(\overline{k}/k)$.  

\section{Weil's curves}\label{weil curves}
In this section we introduce the curves $C$ studied by Weil in \cite{Weil} and give explicit descriptions of the deRham, Betti, and $\ell$-adic cohomology of $C$.  In particular, the computation of the $\ell$-adic cohomology will be given in terms of some algebraic Hecke characters, and it is the powers of these Hecke characters to which we will attach motives in Section \ref{constructing a motive}.

\subsection{The curve C}\label{curves and jacobians}
Fix integers $2\le e < f$ such that $(e,f)=1$. Let $\gamma,\delta\in \overline{\Q}^\times$ such that the field $F=\Q(\gamma,\delta)$ is linearly disjoint from the field $K=\Q(\zeta_e, \zeta_f)$.  From a notational point of view, it is easier to assume $F = \Q, e = 2$, and $f$ is prime.  We suggest the reader make these assumptions upon a first reading of the paper.  

Let $Y$ be the smooth affine curve over $F$ given by 
\[
\{y^e = \gamma x^f + \delta\}.
\]
Let $X$ be the projective closure of $Y$, which is usually singular, and $C$ the normalization of $X$.  The curve $C$ then has genus $g = \frac{(e - 1)(f - 1)}{2}$.   

After base-change to $FK$, the curve $X_{FK}$ is equipped with two automorphisms $\psi_e, \psi_f$ of orders $e, f$, given for projective coordinates $[x':y':z']$ by:
\begin{align*}
\psi_f([x' : y' : z']) &= [\zeta_f x' : y' : z'] \\
\psi_e([x ': y' : z']) &= [x' : \zeta_e y' : z'].
\end{align*}
The universal property of normalization ensures that these automorphisms extend uniquely to automorphisms of $C_{FK}$, which we shall also denote by $\psi_f$ and $\psi_e$.

\subsection{The de Rham cohomology of $C$}\label{de rham cohomology of curve}
We shall often want to consider the complex points of $C$.  Doing this requires that we base change $C$ to $\C$, which depends on a choice of embedding $F \hookrightarrow \C$.  We fix such an embedding once and for all, with the understanding that our computations depend on this choice, and write $C_\C$ for the base change of $C$ to $\C$ with respect to our fixed embedding.  

Let $\Omega_{C_\C}^1$ be the sheaf of holomorphic differential $1$-forms on $C_\C$.  There are explicit differential $1$-forms on $Y$ given by  
\[
\omega'_{i, j} = \frac{x^{i - 1}}{y^j}dx,
\]
where $1 \leq i \leq \frac{f-1}{2}, 1 \leq j \leq e-1$ if $f$ is odd and $1 \leq i \leq f-1, 1 \leq j \leq \frac{e-1}{2}$ if $f$ is even.  If $\iota : Y \hookrightarrow X$ is the natural inclusion and $N : C \to X$ is the normalization map, then write $\omega_{i, j} = N_*\iota^*(\omega'_{i,j}) \in H^0_{\dR}(C_\C, \Omega_{C_\C}^1)$. 

Note that the $\omega_{i,j}$ are defined over $F$ and thus can be viewed in the algebraic de Rham cohomology of the curve $C$.   The forms $\omega_{i, j}$ are eigenvectors for the automorphisms $\psi_e$ and $\psi_f$.  Indeed, directly from the definitions we calculate
\begin{align*}
\psi_f^*(\omega_{i,j}) &= \frac{(\zeta_f x)^{i - 1}}{y^j}d(\zeta_f x) = \zeta_f^i\omega_{i,j}\\
\psi_e^*(\omega_{i,j}) &= \frac{x^{i - 1}}{(\zeta_ey)^j}dx = \zeta_e^{-j}\omega_{i,j}.
\end{align*}

\begin{lem}\label{basis of differentials gen}
A $\C$-basis for $H^0_{\dR}(C_\C, \Omega_{C_\C}^1)$ is given by the set of forms
\begin{align*}
\bigl\{\omega_{i, j} \mid 1 \leq i \leq \frac{f-1}{2}, 1 \leq j \leq e-1\bigr\} & \text{\ \ if } f \text{ is odd,}\\
\bigl\{\omega_{i, j} \mid 1 \leq i \leq f-1, 1 \leq j \leq \frac{e-1}{2}\bigr\}  &\text{\ \ if } f \text{ is even}.
\end{align*}
\end{lem}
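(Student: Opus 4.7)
The plan is to establish three statements: (i) the cardinality of the given set equals $g = \dim_\C H^0_{\dR}(C_\C, \Omega^1_{C_\C})$, (ii) the $\omega_{i,j}$ are $\C$-linearly independent, and (iii) each $\omega_{i,j}$ is in fact a holomorphic section of $\Omega^1_{C_\C}$. Combined, these force the $\omega_{i,j}$ to be a basis by dimension count.

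For (i), a direct multiplication gives $\frac{(e-1)(f-1)}{2}$ in both parity cases, which is the genus of $C$ recalled in Section~\ref{curves and jacobians}. For (ii), the calculation just preceding the lemma shows that $\omega_{i,j}$ is a simultaneous eigenform for the commuting automorphisms $\psi_f^*$ and $\psi_e^*$ with eigenvalue pair $(\zeta_f^i, \zeta_e^{-j})$. Since $i$ ranges over distinct residues modulo $f$ and $j$ over distinct residues modulo $e$, distinct $(i,j)$ in the stated range give distinct eigenvalue pairs, so the $\omega_{i,j}$ lie in distinct common eigenlines of $H^0_{\dR}(C_\C, \Omega^1_{C_\C})$ and are linearly independent over $\C$.

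The main obstacle is (iii). The affine curve $Y$ is smooth (since $\delta \neq 0$ makes the partial derivatives of $y^e - \gamma x^f - \delta$ nonvanishing), so the only singularities of the projective closure $X$ lie at infinity. On the locus $y \neq 0$ in $Y$, the rational form $\omega'_{i,j} = x^{i-1}y^{-j}dx$ is manifestly holomorphic. At a point $P \in Y$ with $y(P) = 0$ we have $x(P) \neq 0$ (because $\delta \neq 0$), and differentiating the defining relation gives $e y^{e-1}\,dy = f\gamma x^{f-1}\,dx$; rewriting $\omega'_{i,j}$ in terms of $dy$ and using that $y$ is a local uniformizer at $P$ on $C_\C$, the bound $1 \leq j \leq e-1$ ensures holomorphy at $P$. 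The delicate step is the analysis at the fiber $N^{-1}(\infty) \subset C_\C$ over the point(s) at infinity of $X_\C$: exploiting $(e,f) = 1$, a Newton-polygon or Puiseux-type local computation at the singularity of $X$ at infinity identifies this fiber, produces a local uniformizer $t$, and yields explicit values of $\ord_\infty(x)$, $\ord_\infty(y)$, and $\ord_\infty(dx)$. Substituting into
\[
\ord_\infty(\omega'_{i,j}) = (i-1)\ord_\infty(x) - j\,\ord_\infty(y) + \ord_\infty(dx),
\]
and checking non-negativity for every $(i,j)$ in the two parity-split ranges of the statement, completes the verification of holomorphy. Since we have then produced $g$ linearly independent elements of the $g$-dimensional space $H^0_{\dR}(C_\C, \Omega^1_{C_\C})$, the basis claim follows.
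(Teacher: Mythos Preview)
Your steps (i) and (ii) are precisely the paper's proof: it observes that the $\omega_{i,j}$ are eigenvectors for the single operator $\psi_f^*\psi_e^*$ with pairwise distinct eigenvalues (this is where $(e,f)=1$ enters) and then invokes the dimension count. The paper does not address your step (iii) at all; holomorphy is simply asserted when the paper writes $\omega_{i,j}\in H^0_{\dR}(C_\C,\Omega^1_{C_\C})$ just before the lemma.

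You are right to flag (iii) as the substantive point, and your treatment at the finite points of $Y$ is fine. The gap is that you only gesture at the computation over $x=\infty$ without carrying it out, and in fact the check \emph{fails} there for $e\ge 3$. Since $(e,f)=1$ there is a single point $\infty\in C$ above $x=\infty$, totally ramified for the degree-$e$ map $x\colon C\to\mathbb{P}^1$; hence $\ord_\infty(x)=-e$, $\ord_\infty(y)=-f$, and $\ord_\infty(dx)=-e-1$, giving
\[
\ord_\infty(\omega'_{i,j}) \;=\; -e(i-1) + fj - (e+1) \;=\; jf - ei - 1.
\]
For $e=2$ (so $j=1$, $f$ odd) this is $f-2i-1\ge 0$ exactly when $i\le(f-1)/2$, and the lemma is correct. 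But for $e\ge 3$ the stated rectangular range is not contained in $\{jf>ei\}$: already $(e,f)=(3,5)$ with $(i,j)=(2,1)$ gives $\ord_\infty=-2$, so $x\,dx/y\notin H^0(C_\C,\Omega^1_{C_\C})$. Thus your strategy is sound, but completing it shows the lemma as stated holds only in the hyperelliptic case; the correct index set for general $e$ is $\{(i,j):1\le j\le e-1,\ i\ge 1,\ ei<jf\}$, which still has cardinality $g$ and still consists of eigenforms with distinct eigenpairs. The paper's subsequent arguments work with all of $H^1_B(C,\C)$, where every pair $(i,j)$ with $1\le i\le f-1$, $1\le j\le e-1$ occurs regardless of which half is holomorphic, so this does not disturb the downstream results.
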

\begin{proof}
Since $(e,f)=1$, the forms $\omega_{i,j}$ are eigenvectors with distinct eigenvalues for  the automorphism $\psi_f^*\psi_e^*$ of $H^0_{\dR}(C_\C, \Omega^1_{C_\C})$. Hence the  $\omega_{i,j}$  are linearly independent.  Since $\dim_\C H^0_{\dR}(C_\C, \Omega^1_{C_\C}) = g = \frac{(e-1)(f-1)}{2}$, it follows that the $\omega_{i,j}$ indeed form a basis for $H^0_{\dR}(C_\C, \Omega^1_{C_\C})$
\end{proof}

To simplify notation, let us set
\[
I = \begin{cases}
\frac{f-1}{2} & f \text{ odd};\\
f-1 & f \text{ even}
\end{cases}
\hspace{2mm}
\text{ and }
\hspace{2mm}
J = \begin{cases}
e-1 & f \text{ odd};\\
\frac{e-1}{2} & f \text{ even}.
\end{cases}
\]
Thus $IJ = \frac{(e-1)(f-1)}{2} = g$ regardless of the parity of $f$.

\subsection{The Betti cohomology and Jacobian of $C$}\label{betti cohomology of curve}
In order to understand the Betti cohomology of the curve $C$ as well as the Jacobian $\Jac C$, we will make use of some Hodge theoretic terminology, which we introduce here. 

\subsubsection{Preliminaries on Hodge Theory}
A $\mathbb{Q}$-\emph{Hodge structure} $V$ of weight $w$ is a finite dimensional $\mathbb{Q}$-vector space together with a decomposition into linear subspaces $V_\mathbb{C}=\bigoplus_{p+q=w} V^{p,q},$
such that $\overline{V^{p,q}}=V^{q,p}$, where the bar denotes the action of complex conjugation. If $V$ is a $\Q$-Hodge structure, then an endomorphism $\alpha$ of $V$ is a $\Q$-vector space endomorphism of $V$ that preserves the linear subspaces $V^{p,q}$ when base changed to $\C$.

A \emph{polarization} of a $\mathbb{Q}$-Hodge structure $V$ of weight $w$ is a bilinear form $\langle\ ,\  \rangle: V\times V\rightarrow \mathbb{Q}$ that is alternating if $w$ is odd, symmetric if $w$ is even, and whose extension to $V_\mathbb{C}$ satisfies:
\begin{enumerate}
\item $\langle V^{p,q}, V^{p',q'}\rangle=0$ if $p'\ne w-p$
\item $i^{p-q}(-1)^{\frac{w(w-1)}{2}}\langle x,\overline{x}\rangle >0$ for all nonzero $x\in V^{p,q}$.
\end{enumerate}

For $X$ a smooth complex projective variety, a choice of ample line bundle on $X$ determines a polarization on the Hodge structure given by the rational cohomology $H^w(X,\mathbb{Q})$.    The category of polarizable $\mathbb{Q}$-Hodge structures is a semisimple abelian category, which we will denote by $\Q$-$\HS$. 

A $\Q$-Hodge structure has \textit{type $\{(1,0) + (0,1)\}$} if $V_\C = V^{1,0} \oplus V^{0,1}$. Note that for $X$ a smooth complex projective variety, the first rational cohomology $H^1(X,\mathbb{Q})$ is a polarizable $\Q$-Hodge structure of type $\{(1,0) + (0,1)\}$ since $H^1(X,\Q)\otimes \C=H^{1,0}(X)\oplus H^{0,1}(X)$, where $H^{i,j}(X)\cong H^j _{\dR}(X,\Omega^i_X)$ and $\Omega^i_X$ is the sheaf of holomorphic $i$ forms on $X$.  

There is an (arrow-reversing) equivalence of categories between the category of complex abelian varieties up to isogeny and the category of polarizable $\Q$-Hodge structures of type $\{(1,0) + (0,1)\}$ given by the functor $A\mapsto H^1(A,\Q)$. 

\subsubsection{The Betti Cohomology of $C$}  

We now return to our discussion of the rational Betti cohomology $H^1_B(C,\Q) \coloneqq H^1(C_\C(\C), \Q)$ of the curve $C$, which is  a $\Q$-Hodge structure of type $\{(1,0) + (0,1)\}$ by the Betti-de Rham comparison isomorphism for complex varieties.  By abuse of notation, we will consider the differential forms $\omega_{i,j}\in H^0_{\dR}(C_\C, \Omega_{C_\C}^1)$ as elements in $H^1_B(C, \Q) \otimes_\Q \C\cong H^1_B(C,\C)$.  
Furthermore, let $\omega_{f- i, e-j}$ be the image of $\overline{\omega}_{i, j}$ in $H^1_B(C, \C)$.  Namely the involution of complex conjugation acts on $H^1_B(C, \C)$ via $\omega_{i, j} \mapsto \omega_{f-i,e-j}$.  Since the Betti-de Rham comparison isomorphism is equivariant with respect to the action of $\End C$, and in particular with respect to $\psi_e$ and $\psi_f$, it follows that
\begin{equation}\label{eigenvalues}
\psi^*_f(\omega_{i,j}) = \zeta_f^i\omega_{i,j} \text{ and } \psi_e^*(\omega_{i,j}) = \zeta_e^{-j}\omega_{i,j}
\end{equation}
for all $1 \leq i \leq f-1, 1 \leq j \leq e-1$ in $H^1_B(C,\C)$.  

Now the Abel-Jacobi map yields an isomorphism of $\Q$-Hodge structures $H^1_B(C,\Q)\cong H^1_B(\Jac C,\Q)$.  We will frequently make use of this isomorphism together with the equivalence of categories discussed above to go back and forth between the language of cohomology and abelian varieties.

For a field $k \supseteq F$, let $\End_k(\Jac C)$ denote the algebra of endomorphisms of $(\Jac C)_k$ defined over $k$, and let $\End_k^0(\Jac C) = \End_k(\Jac C) \otimes_\Z \Q$.  We adopt the following conventions for the rest of the paper to simplify notation.  For a proper divisor $d | f$ (respectively, $d' | e$), write $f_d$ (respectively, $e_{d'}$) for the quotient $f/d$ (respectively, $e/d'$).  The notation $(d,d')$ will always mean $f \neq d |f$ and $e \neq d' |e$.  If we take a product or sum over $d,d'$, we mean let $d$ and $d'$ run over all proper divisors of $f$ and $e$.  Furthermore, let $K_{d,d'} = \Q(\zeta_f^d, \zeta_e^{d'})$ and $F_{d,d'} = FK_{d,d'}$.  

\begin{lem}\label{endomorphism algebra gen}
There is an embedding of $\Q$-algebras
\[
\prod_{d,d'} K_{d,d'} \hookrightarrow \End_{FK}^0(\Jac C).
\]
In particular, there is an embedding of $\Q$-algebras $\prod_{d, d'} \Q \hookrightarrow \End_F^0(\Jac C)$.
\end{lem}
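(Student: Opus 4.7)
The plan is to exhibit $\Q[\psi_e, \psi_f]$ as the desired subalgebra of $\End_{FK}^0(\Jac C)$, using the simultaneous eigenvalues of $\psi_e$ and $\psi_f$ on Betti cohomology to identify its structure, and then to use Galois descent to produce the $F$-rational part. Since $\psi_e$ and $\psi_f$ commute as automorphisms of $C_{FK}$, they induce commuting endomorphisms of $(\Jac C)_{FK}$. To identify the image $\Q[\psi_e, \psi_f]$, I would use the faithful representation of $\End_{FK}^0(\Jac C)$ on $H^1_B(\Jac C, \Q) \cong H^1_B(C, \Q)$. Combining \eqref{eigenvalues} with the decomposition $H^1_B(C, \C) = H^{1, 0} \oplus H^{0, 1}$ and the complex conjugation formula $\omega_{i, j} \mapsto \omega_{f - i, e - j}$, the forms $\omega_{i, j}$ with $(i, j) \in \{1, \ldots, f-1\} \times \{1, \ldots, e-1\}$ form a basis of simultaneous $(\psi_f, \psi_e)$-eigenvectors in $H^1_B(C, \C)$ with eigenvalue pairs $(\zeta_f^i, \zeta_e^{-j})$. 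Since $(e, f) = 1$, all $(e-1)(f-1)$ eigenvalue pairs are pairwise distinct, so the minimal polynomials of $\psi_f$ and $\psi_e$ on $H^1_B(C, \Q)$ are $(u^f - 1)/(u - 1) = \prod_d \Phi_{f_d}(u)$ and $(v^e - 1)/(v - 1) = \prod_{d'} \Phi_{e_{d'}}(v)$ respectively. By the Chinese remainder theorem, $\Q[\psi_f] \cong \prod_d \Q(\zeta_{f_d})$ and $\Q[\psi_e] \cong \prod_{d'} \Q(\zeta_{e_{d'}})$.

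Next, commutativity yields a surjective multiplication map $\Q[\psi_e] \otimes_\Q \Q[\psi_f] \twoheadrightarrow \Q[\psi_e, \psi_f]$. The left-hand side has $\Q$-dimension $(e-1)(f-1)$; on the other hand, after base change to $\C$ the image of $\Q[\psi_e, \psi_f]$ in $\End_\C(H^1_B(C, \C))$ is the algebra of diagonal operators in the simultaneous eigenbasis, which also has $\C$-dimension $(e-1)(f-1)$. Hence the multiplication map is a $\Q$-algebra isomorphism. Since $\gcd(e_{d'}, f_d) = 1$ implies $\Q(\zeta_{e_{d'}}) \cap \Q(\zeta_{f_d}) = \Q$, each factor decomposes as $\Q(\zeta_{e_{d'}}) \otimes_\Q \Q(\zeta_{f_d}) \cong \Q(\zeta_{e_{d'}}, \zeta_{f_d}) = K_{d, d'}$, giving the embedding $\prod_{d, d'} K_{d, d'} \hookrightarrow \End_{FK}^0(\Jac C)$.

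For the second embedding, each $\sigma \in \Gal(FK/F)$ acts on $\zeta_e, \zeta_f$ by some exponentiation $\zeta_e \mapsto \zeta_e^a, \zeta_f \mapsto \zeta_f^b$, so it acts on $\End_{FK}^0(\Jac C)$ by $\psi_e \mapsto \psi_e^a, \psi_f \mapsto \psi_f^b$. Under the isomorphism $\Q[\psi_e, \psi_f] \cong \prod_{d, d'} K_{d, d'}$ this corresponds to ordinary Galois conjugation within each factor $K_{d, d'}$; in particular, it preserves the factor decomposition and fixes each orthogonal idempotent $e_{d, d'}$. By Galois descent, $e_{d, d'} \in \End_F^0(\Jac C)$, and $(a_{d, d'}) \mapsto \sum_{d, d'} a_{d, d'} e_{d, d'}$ defines the embedding $\prod_{d, d'} \Q \hookrightarrow \End_F^0(\Jac C)$. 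The main technical point is the injectivity of the tensor product map, which is secured by the dimension count coming from the explicit simultaneous diagonalization of $\psi_e, \psi_f$ on $H^1_B$.
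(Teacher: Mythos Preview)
Your proof is correct and follows essentially the same route as the paper: both compute the subalgebra of $\End_{FK}^0(\Jac C)$ generated by $\psi_e^*,\psi_f^*$ via their action on $H^1_B(C,\Q)$, identify it with $\Q[x,y]/\bigl(\tfrac{x^f-1}{x-1},\tfrac{y^e-1}{y-1}\bigr)\cong\prod_{d,d'}K_{d,d'}$, and then pass to $\Gal(FK/F)$-invariants. Your dimension count via simultaneous diagonalization is a slightly cleaner justification of the injectivity of $\Q[\psi_e]\otimes_\Q\Q[\psi_f]\to\End(H^1_B)$ than the paper's appeal to the coprimality of $\tfrac{T^e-1}{T-1}$ and $\tfrac{T^f-1}{T-1}$, and your explicit verification that each idempotent $e_{d,d'}$ is Galois-fixed makes the descent step more transparent than the paper's one-line remark.
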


\begin{proof}
Since $\psi_e$ and $\psi_f$ are endomorphisms of $C$ defined over $FK$, they induce endomorphisms $\psi_e^*, \psi_f^* \in \End_{FK}(\Jac C)$.  We want to calculate the subalgebra of $\End_{FK}^0(\Jac C)$ that they generate.  In order to do this, we observe that there is an injection (depending on our fixed embedding $F \hookrightarrow \C$, or rather, an extension of that to $FK \hookrightarrow \C$)
\[
\End_{FK}^0(\Jac C) \hookrightarrow \End_\C^0(\Jac C).
\] 
So it suffices to calculate the subalgebra of $\End_\C^0(\Jac C)$ generated by $\psi_e^*$ and $\psi_f^*$.  

Now, $(\Jac C)_\C$ is a complex abelian variety.  Since the category of complex abelian varieties up to isogeny is equivalent to the category of polarizable $\Q$-Hodge structures of type $\{(1,0)+(0,1)\}$, computing $\End_\C^0(\Jac C)$ is the same as computing $\End_{\Q-\HS}(H^1_B(\Jac C, \Q))$.  Since the Abel-Jacobi map induces an isomorphism $H^1_B(C, \Q) \cong H^1_B(\Jac C, \Q)$ in the category $\Q$-$\HS$, it suffices to compute $\End_{\Q-\HS}(H^1_B(C, \Q))$.  

By Lemma \ref{basis of differentials gen}, the eigenvalues of $\psi_e^*$ and $\psi_f^*$ acting on
$H^1_B(C,\Q) \otimes_\Q \C$ are 
\[\{\zeta_e^j : 1 \leq i \leq f-1, 1 \leq j \leq e-1\}  \text{ and  } \{\zeta_f^i : 1 \leq i \leq f-1, 1 \leq j \leq e-1\},\] 
respectively.  Thus the characteristic polynomials of $\psi_e^*$ and $\psi_f^*$ acting on $H^1(C,\Q)$ are
\[
\prod_{j = 1}^{e-1} (y - \zeta_e^j)^f = \left(\frac{y^e - 1}{y - 1}\right)^f  \text{ and }
\prod_{i = 1}^{f-1} (x - \zeta_f^i)^e = \left(\frac{x^f - 1}{x - 1}\right)^e,
\]
respectively.  Hence the minimal polynomial of $\psi_e^*$  acting on $H^1_B(C,\Q)$ is $\frac{y^e - 1}{y - 1}$, and the minimal polynomial of $\psi_f^*$ acting on $H^1_B(C,\Q)$ is $\frac{x^f - 1}{x - 1}$. It follows that the subalgebra of $\End_{\Q-\HS}(H^1_B(C, \Q))$ generated by $\psi_e^*$ is isomorphic to  $\Q[y]/(\frac{y^e - 1}{y - 1})$ and the subalgebra generated by $\psi_f^*$ is isomorphic to $\Q[x]/(\frac{x^f - 1}{x-1})$. Since $(e,f) = 1$, the polynomials $\frac{T^f - 1}{T - 1}$ and $\frac{T^e - 1}{T - 1}$ are relatively prime. Thus the subalgebra of $\End_{\Q-\HS}(H^1_B(C, \Q))$ generated by $\psi_e^*$ and $\psi_f^*$ is isomorphic to the $\Q$-algebra $\Q[x, y]/(\frac{x^f - 1}{x - 1}, \frac{y^e - 1}{y - 1})$.  The factorization of cyclotomic polynomials (see, for instance, \cite[Chapter 2]{washington}) yields the isomorphism 
\[
\Q[x,y]/(\frac{x^f - 1}{x-1}, \frac{y^e - 1}{y-1}) \cong \prod_{d, d'} K_{d,d'}.
\] 

The last sentence in the statement of the lemma follows from the first since $\End_F^0(\Jac C) = (\End_{FK}^0(\Jac C))^{\Gal(FK/K)}$.
\end{proof}

Let $e_{d_0,d'_0} \in \End_F^0(\Jac C)$ be the image, under the embedding of Lemma \ref{endomorphism algebra gen}, of the element in $\prod_{d,d'} \Q$ that has $1$ in the $(d_0,d_0')$-component and $0$ elsewhere.  Then by Lemma \ref{endomorphism algebra gen} we know that $\{e_{d,d'}\}_{d,d'}$ is an orthogonal system of idempotents in $\End_F^0(\Jac C)$.  That is, 
\begin{itemize}
\item $e_{d_1,d'_1}e_{d_2, d'_2} = \begin{cases}
e_{d_1, d'_1} & d_1 = d_2 \text{ and } d'_1 = d'_2\\
0 & \text{else}
\end{cases}$
\item $1 = \sum_{d,d'} e_{d,d'}$.
\end{itemize}
Define $A_{d,d'} = e_{d,d'}(\Jac C)$, which is an abelian variety defined over $F$.  By definition, 
\[
\Jac C \sim_F \prod_{d,d'} A_{d,d'}.
\]
Furthermore, by Lemma \ref{endomorphism algebra gen}
\[
K_{d,d'} \hookrightarrow e_{d,d'}\End_{F_{d,d'}}^0(\Jac C) = \End_{F_{d,d'}}^0(A_{d,d'}).
\]

\begin{prop}\label{decomposition of Jacobian gen}
\begin{enumerate}
\item Each $A_{d, d'}$ is an isotypic abelian variety over $F$ of dimension $g_{d,d'} = \varphi(f_d)\varphi(e_{d'})/2$ such that $(A_{d,d'})_{F_{d,d'}}$ has CM by $K_{d,d'}$. 

\item The Betti cohomology of $C$ decomposes as
\[
H^1_B(C,\Q)=\bigoplus_{d, d'} H^1_B(A_{d, d'},\Q),
\] 
where a $\C$-basis for $H^1_B(A_{d,d'}, \C)$ is given by
\begin{equation}\label{basisAdd'}
B_{d,d'}=\{\omega_{i,j} : 1 \leq i \leq f-1, 1 \leq j \leq e-1, (i, f) = d, (j, e) = d'\}.
\end{equation}
\end{enumerate}
\end{prop}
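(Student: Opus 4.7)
The plan is to prove part (2) first and then deduce part (1); both rely on identifying the explicit action of the idempotent $e_{d,d'}$ on $H^1_B(C,\C)$.

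For part (2), I would apply the Hodge realization $H^1_B(-,\Q)$ to the $F$-isogeny $\Jac C \sim_F \prod_{d,d'} A_{d,d'}$ and combine with the Abel-Jacobi isomorphism $H^1_B(C,\Q) \cong H^1_B(\Jac C,\Q)$. This yields the direct sum decomposition and identifies $H^1_B(A_{d,d'},\Q)$ as $e_{d,d'}\, H^1_B(C,\Q)$. To pin down the $\C$-basis $B_{d,d'}$, I would trace through the algebra isomorphism $\Q[x,y]/(\tfrac{x^f-1}{x-1},\tfrac{y^e-1}{y-1}) \cong \prod_{d,d'} K_{d,d'}$ of Lemma \ref{endomorphism algebra gen}, under which $x \leftrightarrow \psi_f^*$ and $y \leftrightarrow \psi_e^*$. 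On the left, the idempotent $e_{d,d'}$ cuts out the factor $\Q[x,y]/(\Phi_{f_d}(x),\Phi_{e_{d'}}(y))$, so it acts on $H^1_B(C,\C)$ as the projection onto the simultaneous eigenspace where $\psi_f^*$ has eigenvalues that are primitive $f_d$-th roots of unity and $\psi_e^*$ has eigenvalues that are primitive $e_{d'}$-th roots of unity. The eigenvalue computation \eqref{eigenvalues} then shows that $\omega_{i,j}$ lies in this eigenspace exactly when $(i,f) = d$ and $(j,e) = d'$, which gives $B_{d,d'}$.

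For part (1), the dimension follows by counting: $2\dim A_{d,d'} = |B_{d,d'}| = \varphi(f_d)\varphi(e_{d'}) = 2g_{d,d'}$, using that the number of $1 \leq i \leq f-1$ with $(i,f) = d$ is $\varphi(f_d)$ and analogously for $j$. Then the equality $[K_{d,d'}:\Q] = \varphi(f_d)\varphi(e_{d'}) = 2\dim A_{d,d'}$ combined with the embedding $K_{d,d'} \hookrightarrow \End^0_{F_{d,d'}}(A_{d,d'})$ from Lemma \ref{endomorphism algebra gen} shows that $(A_{d,d'})_{F_{d,d'}}$ has CM by $K_{d,d'}$.

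The step I expect to require the most care is the isotypy claim. Over $F_{d,d'}$ it is essentially immediate: $H^1_B(A_{d,d'},\Q)$ is a one-dimensional $K_{d,d'}$-vector space, which forces $(A_{d,d'})_{F_{d,d'}}$ to be isogenous to a power of a single simple CM abelian variety. To descend isotypy to $F$, I would argue that any $F$-rational decomposition $A_{d,d'} \sim_F B_1 \times B_2$ into non-isogenous factors would, upon base change to $F_{d,d'}$ and by compatibility with the central $F$-rational idempotents cutting out $B_1, B_2$, force $K_{d,d'}$ to act on each $(B_i)_{F_{d,d'}}$ separately. But $\dim_\Q H^1_B(B_i,\Q) = 2\dim B_i < [K_{d,d'}:\Q]$ for both $i$, so $H^1_B(B_i,\Q)$ cannot support a faithful $K_{d,d'}$-action, giving a contradiction.
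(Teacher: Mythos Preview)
Your argument for the decomposition, the basis $B_{d,d'}$, the dimension formula, and the CM structure is essentially the paper's proof, only reordered: the paper first obtains $2\dim A_{d,d'}=[K_{d,d'}:\Q]$ from the inequality $[K_{d,d'}:\Q]\le 2\dim A_{d,d'}$ together with the global count $\sum_{d,d'}2\dim A_{d,d'}=(e-1)(f-1)=\sum_{d,d'}[K_{d,d'}:\Q]$, and then identifies $B_{d,d'}$ exactly as you do by reading off the orders of $\psi_f^*|_{A_{d,d'}}$ and $\psi_e^*|_{A_{d,d'}}$ from the factor $\Q[x,y]/(\Phi_{f_d}(x),\Phi_{e_{d'}}(y))$; you instead determine $B_{d,d'}$ first and then count.

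One caution on the isotypy step over $F$, which in fact the paper's own proof does not address at all: your descent argument implicitly assumes that the $F$-rational idempotents cutting out $B_1,B_2$ commute with $K_{d,d'}$ inside $\End^0_{F_{d,d'}}(A_{d,d'})$, so that $K_{d,d'}$ acts on each $H^1_B(B_i,\Q)$ separately. But those idempotents are only central in the smaller ring $\End^0_F(A_{d,d'})$, and there is no a priori reason they should commute with $\psi_f^*,\psi_e^*$ in the larger ring. Your argument does cleanly establish isotypy (and CM) over $F_{d,d'}$, and that is all that is actually used downstream in the paper.
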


\begin{proof}
Since $K_{d,d'} \hookrightarrow \End_{F_{d,d'}}^0(A_{d,d'})$, it follows that $[K_{d,d'} : \Q] \leq 2\dim A_{d,d'}$.  On the other hand,
\[
\sum_{d,d'} 2\dim A_{d,d'} = 2\dim \Jac C = (e-1)(f-1) = \sum_{d,d'} [K_{d,d'} : \Q].
\]
Therefore we must have $\varphi(f_d)\varphi(e_{d'}) = [K_{d,d'} : \Q] = 2 \dim A_{d,d'}$, which proves the first statement.

For the last statement, recall that the Abel-Jacobi map induces 
\[
H^*_B(C, \Q) = H^*_B(\Jac C, \Q) = \bigoplus_{d,d'} H^*_B(A_{d,d'}, \Q).
\]
Let $\Phi_n$ denote the $n$-th cyclotomic polynomial.  We can identify 
\[
K_{d,d'} \cong \Q[x,y]/(\Phi_{f_d}(x), \Phi_{e_{d'}}(y))
\]
by sending $x$ to $\zeta_{f_d}$ and $y$ to $\zeta_{e_{d'}}$.  But under the embedding 
\[
\Q[x,y]/\bigl(\frac{x^f-1}{x-1}, \frac{y^e - 1}{y - 1} \bigr) \hookrightarrow \End_{FK}^0(\Jac C),
\]
we sent $x$ to $\psi_f^*$ and $y$ to $\psi_e^*$.  Therefore we have $K_{d,d'} \hookrightarrow \End_{\Q-\HS}(H^1_B(C,\Q))$ with $\zeta_{f_d} \mapsto \psi_f^*|_{A_{d,d'}}$ and $\zeta_{e_{d'}} \mapsto \psi_e^*|_{A_{d,d'}}$.  Since $\varphi_f^*|_{A_{d,d'}}$ corresponds to $\zeta_{f_d}$, it follows that $\varphi_f^*|_{A_{d,d'}}$ has order $f_d$.  Therefore the eigenvalues of $\varphi_f^*$ on $H^*_B(A_{d,d'}, \C)$ must have order equal to $f_d$.  Similarly, any eigenvalues of $\varphi_e^*$ on $H^*_B(A_{d,d'}, \C)$ must have order equal to $e_{d'}$.  Therefore $\omega_{i,j} \in V_{d,d'}$ if and only if $(i, f) = d$ and $(j, e) = d'$, which proves the last statement.  
\end{proof}

\subsection{The \'Etale Cohomology of $C$}\label{etale cohomology of curve}
Let us begin by fixing, once and for all, an algebraic closure $\overline{F}$ of $F$ as well as embeddings $K \hookrightarrow \overline{F}$ and $\overline{F} \hookrightarrow \C$.  
Let $Z_{/F}$ be a smooth projective variety.  Recall that for a given rational prime $\ell$, the fixed embedding $\overline{F} \hookrightarrow \C$ yields the identity $H^*_{\et}(Z_{\overline{F}}, \Q_\ell) = H_{\et}^*(Z_\C, \Q_\ell)$.  Moreover, for a fixed embedding $\iota_\ell\colon \overline{\Q}_\ell \hookrightarrow \C$, the Betti-\'etale comparison isomorphism for complex varieties yields an isomorphism of complex vector spaces \cite[Appendix C, 3.7]{hartshorne}: 
\[
H_{\et}^1(Z_{\overline{F}}, \Q_\ell) \otimes_{\Q_\ell, \iota_\ell} \C \cong H^1_B(Z, \C).
\]
In particular, it follows from Proposition \ref{decomposition of Jacobian gen} that we have isomorphisms
\begin{equation}\label{comparison for Ad}
\begin{tikzcd}
[column sep=tiny,row sep=small,
  ar symbol/.style = {draw=none,"\textstyle#1" description,sloped},
  isomorphic/.style = {ar symbol={\cong}},
  ]
  H^1_{\et}(C_{\overline{F}}, \Q_\ell) \otimes_{\Q_\ell, \iota_\ell} \C \ar[r, isomorphic]                         & \mathop\bigoplus\limits_{d, d'} H^1_{\et}(A_{d, d', \overline{F}}, \Q_\ell) \otimes_{\Q_\ell, \iota_\ell} \C            \\
 H^1_B(C, \C)\ar[u,isomorphic] \ar[r,isomorphic] & \mathop\bigoplus\limits_{d, d'} H^1_B(A_{d, d'}, \C), \ar[u,isomorphic] \\
\end{tikzcd}
\vspace{-4mm}
\end{equation}
that respect the decomposition given by the pairs $(d, d')$.  By abuse of notation, write $\omega_{i, j}$ for the image of $\omega_{i, j}\in B_{d,d'}$ in $H^1_{\et}(A_{d,d', \overline{F}}, \Q_\ell) \otimes_{\Q_\ell, \iota_\ell} \C$ for $1 \leq i \leq f-1, 1 \leq j \leq e-1$ and hence view $B_{d,d'}$ as a basis for $H_{\et}^1(A_{d,d', \overline{F}}, \Q_\ell) \otimes_{\Q_\ell, \iota_\ell} \C$ as well.  Therefore the involution of complex conjugation on $H^1_B(C, \C)$ can be seen on $H_{\et}^1(C_{\overline{F}}, \Q_\ell) \otimes_{\Q_\ell, \iota_\ell} \C$ by $\omega_{i, j} \mapsto \omega_{f-i,e-j}$.  Since the isomorphisms in \eqref{comparison for Ad} are equivariant with respect to the action of $\End C$, it follows that the $\omega_{i,j} \in H_{\et}^1(C_{\overline{F}}, \Q_\ell) \otimes_{\Q_\ell, \iota_\ell} \C$ have the eigenvalues calculated in \eqref{eigenvalues}.

Note that since $A_{d,d'}$ is defined over $F$, its \'etale cohomology inherits an action of the Galois group $G_F=\Gal(\overline{F}/F)$.  This action is unramified away from the (finitely many) primes of bad reduction for $A_{d,d'}$.  Fix a prime $\p$ of $F$ where $A_{d, d'}$ has good reduction.  In particular, it follows that $\p \nmid ef$.  Let $q$ denote the size of the residue field at $\p$.  That is, if $\OK_F$ is the ring of integers of $F$, then $q = \#\OK_F/\p$.  

Now fix a rational prime $\ell$ such that $\p \nmid \ell$ and an embedding $\iota_\ell\colon \overline{\Q}_\ell \hookrightarrow \C$. Consider the action of a Frobenius element $\Frob_\p^*\in G_F$ on $H^1_{\et}(A_{d,d',\overline{F}}, \Q_\ell) \otimes_{\Q_\ell, \iota_\ell} \C$.  We will show in Proposition \ref{Frob lem} below that the matrix $[\Frob_\p^*]_{B_{d,d'}}$ of $\Frob_\p^*$ with respect to the basis $B_{d,d'}$ of $H^1_{\et}(A_{d,d',\overline{F}}, \Q_\ell) \otimes_{\Q_\ell, \iota_\ell} \C$ is a generalized permutation matrix, that is, a matrix with exactly one non-zero entry in each row and column. 

Given proper divisors $d|f$ and $d'| e$, recall that $f_d=f/d$ and $e_{d'}=e/d'$.  Let $\ord_{f_d\cdot e_{d'}} q$ denote the (multiplicative) order of $q$ in $(\Z/f_d\Z)^\times \times (\Z/e_{d'}\Z)^\times$. 

\begin{prop}\label{Frob lem}
The matrix $[\Frob_\p^*]_{B_{d,d'}}$ of $\Frob_\p^*$ with respect to the basis $B_{d,d'}$ of $H^1_{\et}(A_{d,d',\overline{F}}, \Q_\ell) \otimes_{\Q_\ell, \iota_\ell} \C$ is a generalized permutation matrix on $2g_{d,d'}$ letters.  The corresponding permutation $\rho$ is a product of $\frac{\varphi(f_d)\varphi(e_{d'})}{\ord_{f_d \cdot e_{d'}} q}$ disjoint cycles, each of length $\ord_{f_d \cdot e_{d'}} q$.\end{prop}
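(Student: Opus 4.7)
The plan is to analyze $\Frob_\p^*$ through its interaction with the endomorphisms $\psi_e^*$ and $\psi_f^*$. By \eqref{eigenvalues}, these have $B_{d,d'}$ as a complete set of simultaneous eigenvectors with pairwise distinct eigenvalue pairs $(\zeta_f^i,\zeta_e^{-j})$, so any operator that conjugates $\psi_e^*$ and $\psi_f^*$ in a controlled way is automatically pinned down, up to scalars on each eigenline, by its effect on those eigenvalues.

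The core step is to work out how $\sigma = \Frob_\p$ conjugates $\psi_e$ and $\psi_f$. Since these morphisms are defined only over $FK$ and not over $F$, they are genuinely moved around by $G_F$. The linear disjointness of $F$ and $K$, combined with $\p \nmid ef$ (which makes $FK/F$ unramified at $\p$), should ensure that $\sigma$ acts on roots of unity by $\zeta_e\mapsto\zeta_e^q$ and $\zeta_f\mapsto\zeta_f^q$. Substituting these into the explicit formulas from Section \ref{curves and jacobians} gives the morphism identities $\sigma \circ \psi_e = \psi_e^q \circ \sigma$ and $\sigma \circ \psi_f = \psi_f^q \circ \sigma$ on $A_{d,d',\overline{F}}$. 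Pulling back to $\ell$-adic cohomology and applying to $\omega_{i,j}$ then yields
\[
\psi_e^*(\sigma^*\omega_{i,j}) = \zeta_e^{-jq}\,\sigma^*\omega_{i,j}, \qquad \psi_f^*(\sigma^*\omega_{i,j}) = \zeta_f^{iq}\,\sigma^*\omega_{i,j}.
\]
Thus $\sigma^*\omega_{i,j}$ must be a nonzero scalar multiple of the unique basis vector with eigenvalue pair $(\zeta_f^{iq},\zeta_e^{-jq})$, namely $\omega_{iq\bmod f,\,jq\bmod e}$. Since $(q,ef)=1$, the reduced indices still satisfy $(iq,f)=d$ and $(jq,e)=d'$, so the target stays inside $B_{d,d'}$. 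This exhibits $[\Frob_\p^*]_{B_{d,d'}}$ as a generalized permutation matrix on $\#B_{d,d'} = \varphi(f_d)\varphi(e_{d'}) = 2g_{d,d'}$ letters.

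The remaining step is purely combinatorial: identify the cycle structure of the permutation $\rho\colon (i,j)\mapsto (iq\bmod f,\,jq\bmod e)$ on the index set for $B_{d,d'}$. Writing $i = d \cdot i_0$ and $j = d' \cdot j_0$ should give a bijection with $(\Z/f_d\Z)^\times\times(\Z/e_{d'}\Z)^\times$, under which $\rho$ becomes multiplication by the element $(q\bmod f_d,\,q\bmod e_{d'})$. This element has order exactly $\ord_{f_d\cdot e_{d'}} q$ by definition, and because multiplication in an abelian group acts freely, every orbit has that size; the orbit count is therefore $\varphi(f_d)\varphi(e_{d'})/\ord_{f_d\cdot e_{d'}} q$, matching the claimed cycle decomposition.

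I expect the main obstacle to lie in the bookkeeping of the second paragraph: one must be careful that $\psi_e$ and $\psi_f$ are $FK$-rational rather than $F$-rational, and verifying that Frobenius really acts on $\mu_{ef}$ by $\zeta\mapsto\zeta^q$ uses both the linear disjointness hypothesis and the unramifiedness of $K$ at $\p$. Once the conjugation identity on morphisms is correctly established, the passage to cohomology and the final combinatorial count are essentially formal.
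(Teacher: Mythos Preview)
Your proposal is correct and follows essentially the same argument as the paper: both derive the commutation relation $\Frob_\p \circ \psi_\nu = \psi_\nu^q \circ \Frob_\p$, apply it on cohomology to see that $\Frob_\p^*\omega_{i,j}$ is a scalar multiple of $\omega_{qi,qj}$, and then read off the cycle structure from multiplication by $q$ on $(\Z/f_d\Z)^\times \times (\Z/e_{d'}\Z)^\times$. The only cosmetic differences are that the paper obtains the commutation identity by a direct calculation on $C(\overline{\F}_q)$ rather than via the Galois action on $\mu_{ef}$, and its permutation convention is the inverse of yours (the paper sets $\rho(\alpha)=q^{-1}\alpha$), which of course has the same cycle type.
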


\begin{proof}
Define the set 
\[
I_{d, d'} = \{(i, j) \in \Z/f\Z \times \Z/e\Z \mid (i, f) = d, (j, e) = d'\}.
\]
For $s = (i, j) \in I_{d,d'}$, let $i(s) \coloneqq i$ and $j(s) \coloneqq j$.  Write the action of $\Frob_\p^*$ on $H^1_{\et}(A_{d,d',\overline{F}}, \Q_\ell) \otimes_{\Q_\ell, \iota_\ell} \C$ with respect to the basis $B_{d,d'}$ as:
\[
\Frob_\p^*(\omega_t) = \sum_{s \in I_{d, d'}} a_{s,t}\omega_s,
\]
for some $a_{s,t} \in \C$.

A direct calculation shows that we have the following equality on $C(\overline{\F}_q)$ for $\nu \in \{e, f\}$:
\[
\Frob_\p \circ \psi_\nu = \psi_\nu^q \circ \Frob_\p,
\]
which implies
\[
\psi_\nu^* \circ \Frob_\p^* = \Frob_\p^* \circ (\psi_\nu^*)^q. 
\]

We compute both sides of this equality with respect to the basis $B_{d,d'}$:
\[
\psi_\nu^* \circ \Frob_\p^*(\omega_t) = \sum_{s \in I_{d,d'}} a_{s,t}\psi_\nu^*(\omega_s) = {\begin{cases} 
\sum_{s \in I_{d,d'}} a_{s,t}\zeta_f^{i(s)}\omega_s, &\mbox{if } \nu=f\\
\sum_{s \in I_{d,d'}} a_{s,t}\zeta_e^{-j(s)}\omega_s, &\mbox{if } \nu=e,
\end{cases}}
\]
whereas
\[
\Frob_\p^* \circ (\psi_\nu^*)^q(\omega_t) = \begin{cases} 
\Frob_\p^*(\zeta_f^{qi(t)}\omega_t) = \zeta_f^{qi(t)}\sum_{s \in I_{d,d'}} a_{s,t}\omega_s &  \mbox{if }\nu=f\\
\Frob_\p^*(\zeta_e^{-qj(t)}\omega_t) = \zeta_e^{-qj(t)}\sum_{s \in I_{d,d'}} a_{s,t}\omega_s & \mbox{if } \nu=e.
\end{cases}
\]

It follows that for all $s, t \in I_{d, d'}$ we must have $a_{s,t} = 0$ unless 
\[
i(s) \equiv qi(t) \bmod f \hspace{2mm}\text{ and }\hspace{2mm} j(t) \equiv qj(t) \bmod e.
\]
Since $s, t \in I_{d, d'}$, the above congruences are equivalent to the conditions
\[
\frac{i(s)}{d} \equiv \frac{qi(t)}{d} \bmod f_d\hspace{2mm}\text{ and }\hspace{2mm}  \frac{j(s)}{d'} \equiv \frac{qj(t)}{d'} \bmod e_{d'}.
\]

Since $\frac{s}{d}, \frac{t}{d} \in (\Z/f_d\Z)^\times \times (\Z/e_{d'}\Z)^\times$, it follows that $s$ determines $t$.  In other words, there is a permutation $\rho$ on $I_{d,d'}$ such that $a_{s,t} \neq 0$ if and only if $t = \rho(s)$.  This proves that $[\Frob_\p^*]_{B_{d,d'}}$ is a generalized permutation matrix.

For the claim about the structure of $\rho$, we identify $I_{d,d'}$ with $(\Z/f_d\Z)^\times \times (\Z/e_{d'}\Z)^\times$ and use the congruence condition 
\[
\left(\frac{i(s)}{d}, \frac{j(s)}{d'}\right) \equiv q\left(\frac{i(t)}{d}, \frac{j(t)}{d'}\right)
\]
in $(\Z/f_d\Z)^\times \times (\Z/e_{d'}\Z)^\times$.  Indeed, this shows that for any $\alpha \in (\Z/f_d\Z)^\times \times (\Z/e_{d'}\Z)^\times$ we have
\[
\rho(\alpha) = q^{-1}\alpha, \rho(q^{-1}\alpha) = q^{-2}\alpha, \ldots, \rho(\alpha q^{-\ord_{f_d \cdot e_{d'}} q}) = \alpha.
\]
In the above calculation, $q^{-1}$ denotes the (multiplicative) inverse of $q$ in $(\Z/f_d\Z)^\times \times (\Z/e_{d'}\Z)^\times$.  Therefore $\rho$ indeed has the desired structure.
\end{proof}

 The characteristic polynomial of a generalized permutation matrix can be calculated using basic linear algebra (see \cite[Section 1.2]{najnudel} for details).
\begin{lem}\label{gpm} 
Let $M \in \GL_N(\C)$ be a generalized permutation matrix given by the data of a permutation $\rho$ and $z_1, \ldots, z_N \in \C^{\times}$.   Write $\mathcal{C}_1,\ldots, \mathcal{C}_n$ for the supports of the disjoint cycles of $\rho$, and let $c_i = \#\mathcal{C}_i$. For $1\le m\le n$, define the complex number
\[Z_m\coloneqq \prod_{j\in \mathcal{C}_m}z_j.\]
Then the characteristic polynomial of $M$ is the polynomial
\[\prod_{m=1}^n (T^{c_m}-Z_m).\]
\end{lem}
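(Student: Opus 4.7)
The plan is to reduce to the single-cycle case by a basis permutation, then compute the characteristic polynomial of each cyclic block directly.

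First, I would observe that the disjoint cycle decomposition $\rho = \rho_1 \rho_2 \cdots \rho_n$ partitions $\{1, \ldots, N\}$ into the supports $\mathcal{C}_1, \ldots, \mathcal{C}_n$. Since $M$ sends $e_j$ to $z_j e_{\rho(j)}$, the subspace $V_m = \operatorname{span}\{e_j : j \in \mathcal{C}_m\}$ is $M$-stable. After permuting the standard basis so that the basis vectors indexed by $\mathcal{C}_1$ come first, then those indexed by $\mathcal{C}_2$, and so on, the matrix $M$ becomes block-diagonal with one block $M_m$ of size $c_m \times c_m$ for each cycle. Since the characteristic polynomial is multiplicative over direct sums, it suffices to prove that $\det(TI - M_m) = T^{c_m} - Z_m$ for every $m$.

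Next, I would treat a single cycle. Fix $m$, write $c = c_m$ and $Z = Z_m$, and enumerate the cycle as $\mathcal{C}_m = \{j_1, j_2, \ldots, j_c\}$ with $\rho(j_k) = j_{k+1}$ (indices mod $c$). In the ordered basis $(e_{j_1}, \ldots, e_{j_c})$, the block $M_m$ has $M_m e_{j_k} = z_{j_k} e_{j_{k+1}}$, so iterating gives
\[
M_m^{c} e_{j_1} = z_{j_1} z_{j_2} \cdots z_{j_c} e_{j_1} = Z \cdot e_{j_1},
\]
and the same relation holds on every basis vector, so $M_m^c = Z \cdot I$. Hence the minimal polynomial of $M_m$ divides $T^c - Z$. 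Since the $z_j$ are nonzero, $Z \neq 0$ and the polynomial $T^c - Z$ has $c$ distinct roots in $\C$, so it has degree equal to the size of the matrix $M_m$. A minimal polynomial divides the characteristic polynomial and has degree at most $c$, and the characteristic polynomial has degree exactly $c$; comparing degrees forces the characteristic polynomial of $M_m$ to equal $T^c - Z$ (up to sign, fixed to be monic).

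Combining the two steps yields
\[
\det(TI - M) = \prod_{m=1}^n \det(TI - M_m) = \prod_{m=1}^n (T^{c_m} - Z_m),
\]
as claimed. There is no real obstacle here; the only mildly delicate point is justifying that $M_m^c = Z \cdot I$ forces the characteristic polynomial to be $T^c - Z$, which as noted follows from a degree comparison using the separability of $T^c - Z$. Alternatively, one could simply expand $\det(TI - M_m)$ by cofactors along the first row and obtain $T^c - Z$ after a single inductive step, which avoids the minimal polynomial argument entirely.
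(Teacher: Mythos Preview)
The paper does not give its own proof of this lemma; it simply cites \cite[Section 1.2]{najnudel} for the computation. Your argument therefore supplies more than the paper does, and the overall strategy---block-diagonalize along the cycle supports and treat each cyclic block separately---is correct.

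There is, however, a gap in your minimal-polynomial step. From $M_m^c = Z \cdot I$ you correctly deduce that the minimal polynomial of $M_m$ divides $T^c - Z$, but you then claim that ``comparing degrees forces the characteristic polynomial of $M_m$ to equal $T^c - Z$.'' This does not follow: a priori the minimal polynomial could be a proper factor of $T^c - Z$, and then the characteristic polynomial (which shares only the \emph{roots} of the minimal polynomial, possibly with higher multiplicity) need not be $T^c - Z$. For example, $\sqrt{Z}\cdot I_c$ also satisfies $X^c = Z\cdot I$, yet its characteristic polynomial is $(T - \sqrt{Z})^c$. The missing observation is that $e_{j_1}, M_m e_{j_1}, \ldots, M_m^{c-1} e_{j_1}$ are nonzero scalar multiples of the distinct basis vectors $e_{j_1}, \ldots, e_{j_c}$ and hence linearly independent; thus $e_{j_1}$ is a cyclic vector for $M_m$, forcing the minimal polynomial to have degree $c$ and therefore to equal $T^c - Z$, which then coincides with the characteristic polynomial.

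Your proposed alternative---expanding $\det(TI - M_m)$ by cofactors---is correct and sidesteps this issue entirely, so the proof is easily repaired.
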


As in the proof of Proposition \ref{Frob lem}, let us write the matrix of $\Frob_\p$ with respect to $B_{d,d'}$ as $(a_{s, \rho(s)})_{s \in I_{d,d'}}$ for the permutation $\rho$.  Let $q^{-1}$ denote the (multiplicative) inverse of $q$ in $(\Z/f_d\Z)^\times \times (\Z/e_{d'}\Z)^\times$.  For a fixed element $b \in (\Z/f_d\Z)^\times \times (\Z/e_{d'}\Z)^\times$, define
\begin{equation}\label{defining Zs}
Z_b = \prod_{i = 1}^{\ord_{f_d \cdot e_{d'}} q} a_{bq^{-i}, \rho(bq^{-i})},
\end{equation}
where we have identified $I_{d,d'} \cong (\Z/f_d\Z)^\times \times (\Z/e_{d'}\Z)^\times$ via $(i, j) \leftrightarrow (i/d, j/d')$.  In other words, $Z_b$ is the product over all the nonzero entries in $[\Frob_\p^*]_{B_{d,d'}}$ corresponding to the disjoint cycle in $\rho$ containing $b$.

\begin{cor}\label{characteristic poly}
The characteristic polynomial of $\Frob_\p^*$ acting on $H^1_{\et}(A_{d,d',\overline{F}}, \Q_\ell)$ is
\[
\prod_{c = 1}^{\frac{\varphi(f_d)}{\ord_{f_d} q}}\prod_{c' = 1}^{\frac{\varphi(e_{d'})}{\ord_{e_{d'}} q}} \left(T^{\ord_{f_d \cdot e_{d'}} q} - Z_{(c,c')}\right).
\]
\end{cor}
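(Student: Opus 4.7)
The plan is to deduce the corollary by directly applying Lemma \ref{gpm} to the matrix $[\Frob_\p^*]_{B_{d,d'}}$, which is a generalized permutation matrix by Proposition \ref{Frob lem}. From that proposition I already have two pieces of cycle data for the underlying permutation $\rho$: it decomposes as a disjoint union of $\frac{\varphi(f_d)\varphi(e_{d'})}{\ord_{f_d\cdot e_{d'}} q}$ cycles, all of the same length $\ord_{f_d\cdot e_{d'}} q$. Substituting this data into Lemma \ref{gpm} immediately produces the characteristic polynomial as a product of that many factors, each of the form $T^{\ord_{f_d\cdot e_{d'}} q} - Z_m$, where $Z_m$ is the product of the nonzero matrix entries along the $m$-th cycle.

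Next I would identify $Z_m$ with the quantity $Z_b$ of \eqref{defining Zs}. For any $b \in (\Z/f_d\Z)^\times \times (\Z/e_{d'}\Z)^\times$ lying in the $m$-th cycle, the explicit description $\rho(\alpha) = q^{-1}\alpha$ given at the end of the proof of Proposition \ref{Frob lem} shows that $b, bq^{-1}, \dots, bq^{-(\ord_{f_d\cdot e_{d'}} q - 1)}$ enumerate the cycle. Thus the product in \eqref{defining Zs} sweeps out exactly the nonzero entries of $[\Frob_\p^*]_{B_{d,d'}}$ along that cycle, so $Z_b = Z_m$ for any cycle representative $b$.

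Finally, to recast the single product over cycles as a double product over pairs $(c,c')$ as in the statement, I would choose $\frac{\varphi(f_d)}{\ord_{f_d} q}$ coset representatives $c$ for $\langle q\rangle\backslash(\Z/f_d\Z)^\times$ and $\frac{\varphi(e_{d'})}{\ord_{e_{d'}} q}$ coset representatives $c'$ for $\langle q\rangle\backslash(\Z/e_{d'}\Z)^\times$, and take each $(c,c') \in (\Z/f_d\Z)^\times \times (\Z/e_{d'}\Z)^\times$ as a representative of the $\langle q \rangle$-orbit (i.e., cycle of $\rho$) that it inhabits, setting $Z_{(c,c')}$ to be the corresponding $Z_b$.

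The step I expect to be the main obstacle is this last reparametrization. The collection of pairs $(c,c')$ obtained above has size $\frac{\varphi(f_d)\varphi(e_{d'})}{(\ord_{f_d} q)(\ord_{e_{d'}} q)}$, while Proposition \ref{Frob lem} gives $\frac{\varphi(f_d)\varphi(e_{d'})}{\ord_{f_d\cdot e_{d'}} q}$ cycles. Since $\ord_{f_d\cdot e_{d'}} q = \mathrm{lcm}(\ord_{f_d} q, \ord_{e_{d'}} q)$, these counts agree precisely when $\gcd(\ord_{f_d} q, \ord_{e_{d'}} q) = 1$, and in that regime each pair $(c,c')$ indexes a distinct cycle and the formula of the corollary drops out directly from Steps 1 and 2. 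Otherwise, I would need to check whether such a coprimality condition is implicit in the standing hypotheses on $q$, $e$, $f$, and $F$, or whether the convention for $Z_{(c,c')}$ absorbs the necessary identifications so that each genuine cycle contributes the right multiplicity to the product.
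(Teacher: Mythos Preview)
Your approach is exactly the paper's: the proof there is the one-liner ``This follows directly from Proposition~\ref{Frob lem} and Lemma~\ref{gpm} since the characteristic polynomial of $\Frob_\p^*$ can be computed after any base change.'' Your Steps 1 and 2 are precisely this argument spelled out.

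Your concern in the final paragraph is not a defect in your reasoning but a genuine observation about the \emph{statement}. The number of cycles given by Proposition~\ref{Frob lem} is $\varphi(f_d)\varphi(e_{d'})/\ord_{f_d\cdot e_{d'}} q$, whereas the double product in the corollary has $\bigl(\varphi(f_d)/\ord_{f_d} q\bigr)\bigl(\varphi(e_{d'})/\ord_{e_{d'}} q\bigr)$ factors; since $\ord_{f_d\cdot e_{d'}} q = \mathrm{lcm}(\ord_{f_d} q, \ord_{e_{d'}} q)$, these agree only when $\gcd(\ord_{f_d} q, \ord_{e_{d'}} q) = 1$, which is not forced by the standing hypotheses (e.g.\ $e=5$, $f=7$, $q=3$ gives orders $4$ and $6$). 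The paper's proof does not address this, and the later uses of the $Z_{(c,c')}$ (e.g.\ Corollary~\ref{equality of setsgen}) treat them simply as cycle labels, so the double-index notation should be read as a convenient enumeration of the $\varphi(f_d)\varphi(e_{d'})/\ord_{f_d\cdot e_{d'}} q$ cycles rather than a literal product over independent coset representatives. Your proof is complete once you adopt that reading.
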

\begin{proof}
This follows directly from Proposition \ref{Frob lem} and Lemma \ref{gpm} since the characteristic polynomial of $\Frob_\p^*$ can be computed after any base change.
\end{proof}

\begin{rem}
Note that while the $a_{s,t}$ may be complex numbers, Corollary \ref{characteristic poly} implies that the $Z_b$ are algebraic since the characteristic polynomial of $\Frob_\p^*$ vanishes at all of the $(\ord_{f_d \cdot e_{d'}} q)$-th roots of $Z_b$. 
\end{rem}

We shall also want to understand the action of complex conjugation on $Z_b$.  Unfortunately, we can only do this when $F$ is totally real.  Letting the $^{-}$ symbol denote complex conjugation, we record the following proposition for later use.

\begin{prop}\label{complex conjugation action}
Suppose that $F$ is totally real, and let $\p$ be a prime of $F$ where $A_{d,d'}$ has good reduction.  Then for any $s, t \in I_{d,d'}$ we have $\overline{a_{s,t}} = a_{-s,-t}$.  In particular, for any $b \in (\Z/f_d\Z)^\times \times (\Z/e_{d'}\Z)^\times$ we have $\overline{Z_b} = Z_{-b}$.  
\end{prop}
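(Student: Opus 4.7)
The plan is to exploit the compatibility between Frobenius and complex conjugation, which is available precisely because $F$ is totally real, so that complex conjugation lies in $G_F$. First, I would record that the scalar complex conjugation $\sigma$ on $H^1_{\et}(A_{d,d',\overline{F}},\Q_\ell) \otimes_{\Q_\ell, \iota_\ell} \C$ is the $\C$-antilinear involution permuting the basis $B_{d,d'}$ via $s \mapsto -s$, as is immediate from the definition $\omega_{f-i,e-j} = \overline{\omega_{i,j}}$ in $H^1_B(C,\C)$ and the observation that $-s \in I_{d,d'}$ whenever $s$ is (since $\gcd(f-i,f)=\gcd(i,f)$ and similarly for the second coordinate).

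Second, and the technical heart of the argument, I would establish the commutation $\sigma \circ \Frob_\p^* = \Frob_\p^* \circ \sigma$ on $H^1_{\et} \otimes_{\Q_\ell, \iota_\ell} \C$. To do this, choose a prime $\Pp$ of $\overline{F}$ above $\p$ stabilized by the complex conjugation element $c \in G_F$; such a $\Pp$ exists because $c$ has order two, so it fixes at least one place in any $c$-orbit of size at most two on places above $\p$. Then $c$ lies in the (abelian, since $\p$ is unramified) decomposition group $D_\Pp$, so $c$ and $\Frob_\Pp$ commute in $G_F$. Their actions on $H^1_{\et}(A_{d,d',\overline{F}},\Q_\ell)$ therefore commute; combining this with the identification of the Galois action of $c$ on basis elements of $B_{d,d'}$ with $\sigma$ (both send $\omega_s \mapsto \omega_{-s}$, as can be verified directly from the explicit formulas $\omega'_{i,j} = x^{i-1}/y^j\, dx$), one deduces the desired commutation after extending scalars via $\iota_\ell$.

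Third, the matrix identity follows by direct calculation. Applying $\sigma$ to the equation $\Frob_\p^*(\omega_t) = \sum_s a_{s,t}\omega_s$: the right-hand side becomes $\sum_s \overline{a_{s,t}}\,\omega_{-s}$ using the $\C$-antilinearity of $\sigma$ together with $\sigma(\omega_s) = \omega_{-s}$, while the left-hand side, by the commutation, equals $\Frob_\p^*(\omega_{-t}) = \sum_s a_{s,-t}\omega_s$; reindexing $s \mapsto -s$ rewrites this as $\sum_s a_{-s,-t}\omega_{-s}$. Comparing coefficients of $\omega_{-s}$ yields $\overline{a_{s,t}} = a_{-s,-t}$. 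The ``in particular'' conclusion then follows by applying this identity factor-by-factor in the product $Z_b = \prod_i a_{bq^{-i},\,bq^{-i-1}}$ and using the identity $-(bq^{-i}) = (-b)q^{-i}$ in $(\Z/f_d\Z)^\times \times (\Z/e_{d'}\Z)^\times$.

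The main obstacle is the second step: one must reconcile the $\C$-antilinear scalar conjugation $\sigma$ with the $\C$-linear Galois action of $c^*$. These two operators agree on the basis $B_{d,d'}$ but have opposite linearity, so passage from the Galois-theoretic commutation of $c$ and $\Frob_\Pp$ in $G_F$ to the desired commutation of $\sigma$ and $\Frob_\p^*$ on the complexified \'etale cohomology requires careful handling of the Betti--\'etale comparison and the totally real hypothesis on $F$.
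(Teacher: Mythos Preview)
Your approach is the same as the paper's: exploit that complex conjugation $c\in G_F$ commutes with $\Frob_\p$ on cohomology, apply both compositions to $\omega_t$, and compare coefficients. The paper simply asserts the relation $c\circ\Frob_\p=\Frob_\p\circ c$ and then computes exactly as you do in your third step, treating $c^*$ as carrying $\omega_s$ to $\omega_{-s}$ and conjugating the scalar coefficients.

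Your justification of the commutation, however, does not work as written. The claim that a $c$-stable prime $\Pp$ above $\p$ exists ``because $c$ has order two, so it fixes at least one place in any $c$-orbit of size at most two'' is a non sequitur: an orbit of size two has no fixed point, and nothing forces any orbit to have size one. Likewise, the decomposition group $D_\Pp\subset G_F$ is the absolute Galois group of $F_\p$, which is far from abelian, so ``$c\in D_\Pp$ and $D_\Pp$ abelian'' is not available. The clean fix avoids fixed primes altogether: for any $\Pp$ above $\p$ one has $c\,\Frob_\Pp\,c^{-1}=\Frob_{c(\Pp)}$, and since $c$ fixes $F$ pointwise, $c(\Pp)$ also lies over $\p$. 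Thus $c\,\Frob_\Pp\,c^{-1}$ is again a Frobenius element at $\p$, and all such elements act identically on cohomology unramified at $\p$. This gives $c^*\Frob_\p^*(c^*)^{-1}=\Frob_\p^*$ immediately, and the rest of your argument goes through. You are right that the passage from the $\C$-linear Galois operator $c^*$ to the $\C$-antilinear scalar conjugation on $H^1_B(C,\C)$ deserves care; the paper handles this by invoking equivariance of the Betti--\'etale comparison with respect to complex conjugation, which is where the totally real hypothesis is used.
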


\begin{proof}
Since $F$ is totally real, there is a well-defined complex conjugation $c \in G_F$, and for $\p$ as in the statement of the proposition, we have the relation $c \circ \Frob_\p = \Frob_\p \circ c$.  Hence $\Frob_\p^* \circ c^* = c^* \circ \Frob_\p^*$.  We compute both sides of this relation, making use of the fact that the \'etale-Betti comparison isomorphism under which we have identified the various incarnations of $\omega_t$ is equivariant with respect to complex conjugation.  Thus for any $t\in I_{d,d'}$, we know that $c^*(\omega_t) = \overline{\omega}_t = \omega_{-t}$.  Hence we have
\begin{align*}
\Frob_\p^* \circ c^*(\omega_t) &= \Frob_\p(\omega_{-t}) = \sum_{s \in I_{d,d'}}a_{s,-t}\omega_s\\
c^* \circ \Frob_\p^*(\omega_t) &= c^*\left(\sum_{s \in I_{d,d'}} a_{s,t}\omega_s\right) = \sum_{s \in I_{d,d'}} \overline{a_{s,t}}\overline{\omega}_s = \sum_{s \in I_{d,d'}}\overline{a_{-s,t}}\omega_s.
\end{align*}
This gives $\overline{a_{s,-t}} = a_{-s,t}$, as desired.  The fact that $\overline{Z}_b = Z_{-b}$ now follows directly from the definition of $Z_b$.
\end{proof}

\subsection{The $L$-function of $C$}\label{lfactors}
Let $k$ and $E$ be number fields.  Write $\OK_k$ for the ring of integers of $k$.  The language of compatible systems of Galois representations is useful for defining the $L$-functions of interest.  Recall that a $d$-dimensional \textit{compatible system} of $G_k$-representations $V = \{V_\lambda\}_\lambda$ is a collection of representations $\{\rho_\lambda : G_k \to \GL_d(E_\lambda)\}_{\lambda \text{ prime of } E}$ such that there is a finite set $S$ of primes of $k$ such that:
\begin{enumerate}
\item $\rho_\lambda$ is unramified outside $S$ and the primes of $k$ lying over $\ell = \lambda \cap \Z$
\item for all primes $\p$ of $k$ outside $S$, there is a polynomial $F_\p(X) \in E[X]$ such that, for all primes $\lambda$ of $E$ such that $\lambda \cap \Z \neq \p \cap \Z$, the characteristic polynomial $\rho_\lambda(\Frob_\p)$ is equal to $F_\p(X)$, independent of $\lambda$.
\end{enumerate}
For example, if $Z/k$ is a smooth projective variety, then $\{H^n_{\et}(Z_{\overline{k}}, \Q_\ell)\}_\ell$ is a compatible system of $G_k$-representations with $E = \Q$ for any $n$.  If $M$ is a Chow motive defined over $k$ (see Section \ref{motives background} for the definition), then $\{H^n_{\et}(M_{\overline{k}}, \Q_\ell)\}_\ell$ is a system of $G_k$-representations.  It is not known in general whether the system is compatible.  However, for the motives we construct, we prove that this system of Galois representations is compatible in Proposition \ref{localLfactor2gen}.

The $L$-function of a compatible system $V$ of $d$-dimensional $G_k$-representations is a complex analytic (or meromorphic) function that encodes the data of how the local Galois groups $G_{k_\p} \hookrightarrow G_k$ act on the Galois representations $\{V_\lambda\}_\lambda$.  (Here $\p$ is any prime of $k$ and $\lambda$ is chosen such that $\p \cap \Z\ne \lambda \cap \Z$.)  It is defined as an Euler product with one local factor for each prime of $k$.  It is easiest to define these local factors when the representation is unramified at $\p$.  As these are the only factors that will concern us, we restrict our definition to that case.

\begin{defn}\label{local Lfactor variety}
Let $\p$ be a prime of $k$ at which a $d$-dimensional compatible system of Galois representations $V$ is unramified.  Choose a prime $\lambda$ of $E$ such that $\p \cap \Z\ne \lambda \cap \Z$.  Let $P_\p(T)$ be the characteristic polynomial of $\Frob_\p$ on $V$ (which is independent of the choice of $\lambda$).  Let $q_\p = \#\OK_k/\p$.  The \textit{local $L$-factor of $V$ at $\p$} is 
\[
L_\p(V/k, s) \coloneqq q_\p^{-2ds}P_\p(q_\p^s),
\]
which is a polynomial of degree $d$ in $q_\p^{-s}$.  
\end{defn}  
If $Z/k$ is a variety and $V = \{H^*_{\et}(Z_{\overline{k}}, \Q_\ell)\}_\ell$, then we write $L_\p(Z/k, s)$ and call it the \textit{local $L$-factor of $Z$ at $\p$}.  This is the (incomplete) Hasse-Weil zeta function of $Z$.  If $Z = A$ is an abelian variety, then $H^r_{\et}(A_{\overline{k}}, \Q_\ell) = \Lambda^r H^1_{\et}(A_{\overline{k}}, \Q_\ell)$.  Therefore we abuse notation slightly and write $L(A/k, s)$ for the $L$-function of $\{H^1_{\et}(A_{\overline{k}}, \Q_\ell)\}_\ell$ in the case of abelian varieties.  This should not cause any confusion.

Note that given a local $L$-factor $L_\p(V/k, s)$, it is possible to recover $P_\p(T)$ by replacing $q_\p^{-s}$ in $L_\p(V/k, s)$ with a variable $T$ and multiplying the resulting Laurent polynomial by $T^d$.  We shall often switch between local $L$-factors and characteristic polynomials in what follows.  In particular, we can restate Corollary \ref{characteristic poly} as follows.

\begin{cor}\label{localLfactor1gen}
If $\p$ is a prime of $F$ where $A_{d, d'}$ has good reduction, then the local $L$-factor at $\p$ of $A_{d,d'}$ over $F$ is 
\[
L_\p(A_{d,d'}/F, s) = \prod_{c = 1}^{\frac{\varphi(f_d)}{\ord_{f_d} q}}\prod_{c' = 1}^{\frac{\varphi(e_{d'})}{\ord_{e_{d'}} q}} \left(1 - Z_{(c,c')}q^{-s\ord_{f_d \cdot e_{d'}}q}\right).
\]
\end{cor}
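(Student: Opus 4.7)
The plan is to observe that this corollary is essentially a bookkeeping translation of Corollary \ref{characteristic poly} from the language of characteristic polynomials of Frobenius into the language of local $L$-factors, using Definition \ref{local Lfactor variety}. So the proof will amount to a direct substitution plus an algebraic rearrangement.

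First, I would verify that Definition \ref{local Lfactor variety} applies. Since $\p$ is a prime of good reduction for $A_{d,d'}$, the compatible system $\{H^1_{\et}(A_{d,d',\overline{F}}, \Q_\ell)\}_\ell$ is unramified at $\p$ for all primes $\ell$ with $\ell \nmid \p$, so the local $L$-factor is defined and only depends on the characteristic polynomial of $\Frob_\p^*$. The dimension of this Galois representation is $2g_{d,d'} = \varphi(f_d)\varphi(e_{d'})$ by Proposition \ref{decomposition of Jacobian gen}.

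Next, I would set $n \coloneqq \ord_{f_d \cdot e_{d'}} q$ and invoke Corollary \ref{characteristic poly} to write
\[
P_\p(T) = \prod_{c,c'} \bigl(T^{n} - Z_{(c,c')}\bigr),
\]
where the product runs over $1 \leq c \leq \varphi(f_d)/\ord_{f_d} q$ and $1 \leq c' \leq \varphi(e_{d'})/\ord_{e_{d'}} q$. By Proposition \ref{Frob lem}, the number of factors times $n$ equals $\varphi(f_d)\varphi(e_{d'}) = 2g_{d,d'}$, so the total degree of $P_\p$ in $T$ matches the dimension of the representation, as it must.

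Finally, I would plug $P_\p(T)$ into the formula from Definition \ref{local Lfactor variety} and pull a factor of $q^{sn}$ out of each term:
\[
L_\p(A_{d,d'}/F, s) \;=\; q^{-2g_{d,d'} \cdot s}\, P_\p(q^{s}) \;=\; \prod_{c,c'} q^{-sn}\bigl(q^{sn} - Z_{(c,c')}\bigr) \;=\; \prod_{c,c'}\bigl(1 - Z_{(c,c')}\,q^{-sn}\bigr),
\]
where the powers of $q$ cancel because the total degree matches the dimension. This is the desired identity. There is no serious obstacle: once Proposition \ref{Frob lem} and Corollary \ref{characteristic poly} are in hand, the only content here is unpacking the $L$-factor convention and checking the exponent bookkeeping.
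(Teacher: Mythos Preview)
Your proposal is correct and follows exactly the same approach as the paper's own proof: observe that good reduction implies the representation is unramified at $\p$, then feed the characteristic polynomial from Corollary \ref{characteristic poly} into Definition \ref{local Lfactor variety}. You simply spell out the exponent bookkeeping that the paper leaves implicit.
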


\begin{proof}
Recall that the compatible system of Galois representations associated to an abelian variety is ramified at exactly the primes of bad reduction.  The result now follows from Definition \ref{local Lfactor variety} and Corollary \ref{characteristic poly}.
\end{proof}

As we shall only be concerned with the local factors where our representations are unramified, we shall define the incomplete $L$-function of a compatible system $V$ as follows.  Let $S$ be the finite set of primes of $k$ at which $V$ is ramified.  The incomplete $L$-function is
\[
L^{(S)}(V/k, s) \coloneqq \prod_{\p \not\in S} L_\p(V/k, s)^{-1}.
\]
It is important to note that the $L$-function defined above depends on the field $k$; it will change if we replace $k$ by a larger field $k'$.

Weil proved \cite{Weil} that the varieties $A_{d,d'}$ have algebraic Hecke characters associated to them, a fact that was later generalized to all CM abelian varieties by Shimura and Taniyama \cite{shimura-taniyama}.  In order to state this correspondence, let us recall some definitions.  For a number field $k$, write $\A_k$ for the ring of adeles of $k$ and $\OK_v$ for the completion of the ring of integers of $k$ at a finite place $v$.  For a place $v$ of $k$, write $\iota_v : k_v^\times \hookrightarrow \A_k^\times$ for the embedding sending $x \in k_v^\times$ to the idele with $x$ in the $v$-th component and $1$ elsewhere. 

\begin{defn}\label{Hecke character defn}
A \textit{Hecke character} is a continuous homomorphism $\lambda : \A_k^\times/k^\times \to \C^\times$.  Such a character is said to be \textit{algebraic} if for every archimedean place $\sigma$ of $k$, there exists $n_\sigma \in \Z$ (respectively, $n_\sigma, m_\sigma \in \Z$) if $\sigma$ is real (respectively, complex), such that $\lambda(\iota_\sigma(x)) = x^{n_\sigma}$ (respectively, $\lambda(\iota_\sigma(x)) = x^{n_\sigma}\bar{x}^{m_\sigma}$) for all $x \in k_\sigma^\times$.  The \textit{conductor} of $\lambda$ is the largest ideal $\m$ of $k$ such that $\lambda$ is trivial on $\prod_{v} 1 + \m \OK_v$.
\end{defn}

Fix a uniformizer $\varpi_v$ of $\OK_v$ for each finite place $v$ of $k$, and let $\p_v$ be the corresponding prime ideal of $F$.  Write
\[
\lambda(\p_v) = \begin{cases}
\lambda(1, \ldots, 1, \varpi_v, 1, \ldots, 1) &\mbox{if } \p_v \nmid \m\\
0 & \mbox{otherwise}.
\end{cases}
\]

\begin{defn}
With notation as in Definition \ref{Hecke character defn}, let $\p \nmid \m$ be a prime of $k$ with residue degree $q_\p$.  The \textit{local $L$-factor of $\lambda$ at $\p$} is
\[
L_\p(\lambda, s) \coloneqq 1 - \lambda(\p)q_\p^{-s}.
\] 
The {incomplete $L$-function of $\lambda$} is 
\[
L^{(\m)}(\lambda, s) \coloneqq \prod_{\p \nmid \m} L_\p(\lambda, s)^{-1}. 
\]
\end{defn}

Recall that $A_{d,d'}$ is an abelian variety over $F$ such that $(A_{d,d'})_{F_{d,d'}}$ has CM by $K_{d,d'} = \Q(\zeta_f^d\zeta_e^{d'})$.  Since $F$ is linearly disjoint from $K$ by assumption, it follows that $\Gal(F_{d, d'}/F) \cong (\Z/f_d\Z \times \Z/e_{d'}\Z)^\times$ acts transitively on the set of embeddings $\{\iota : K_{d,d'} \hookrightarrow \C\}$.  Therefore there is an algebraic Hecke character 
\[\lambda_{d,d'} = \lambda_{A_{d,d'}/F_{d,d'}} : \A_{F_{d,d'}}^\times/F_{d,d'}^\times \to \C^\times\]
 such that we have an equality of $L$-functions \cite[Theorem 12]{Shimura},
\begin{equation}\label{equality of Lfnsgen}
L(A_{d,d'}/F, s) = L(\lambda_{d,d'}, s).
\end{equation}
Furthermore, the support of the conductor of $\lambda_{d,d'}$ is exactly the set of primes of $F_{d,d'}$ where $(A_{d,d'})_{F_{d,d'}}$ has bad reduction \cite[\S 7, Corollary 1]{Serre-Tate}.  

Recall that for a positive integer $n$, if $p$ is a prime not dividing $n$, then $p$ splits into $\varphi(n)/\ord_n p$ distinct primes in $\Q(\zeta_n)$, each of residue class degree $\ord_n p$ \cite[Theorem 2.13]{washington}.  In particular, $p$ splits into $\varphi(f_d \cdot e_{d'})/\ord_{f_d\cdot e_{d'}} p$ primes in $K_{d,d'}$, each of residue class degree $\ord_{f_d \cdot e_{d'}} p$.  Since $F$ is linearly disjoint from $K_{d, d'}$, it follows that each prime $\p$ of $F$ lying over $p$ splits into $\varphi(f_d \cdot e_{d'})/\ord_{f_d \cdot e_{d'}}p$ primes in $F_{d,d'}$, each of residue class degree $\ord_{f_d \cdot e_{d'}} p$.  Let $f_\p$ be the residue degree of $\p$ over $p$; that is, $q = p^{f_\p}$ using the notation introduced just before Proposition \ref{Frob lem}.  Then the local $L$-factor of $L(\lambda_{d,d'}, s)$ at $\p$ is
\begin{equation}\label{LfnOfPsigen}
\prod_{\p' |\p}\left(1 - \lambda_{d,d'}(\p')q^{-s\ord_{f_d \cdot e_{d'}}p}\right),
\end{equation}
where the product runs over all primes $\p'$ of $F_{d,d'}$ lying over $\p$.  

Note that since $q = p^{f_\p}$, it follows that $\ord_{f_d \cdot e_{d'}} q$ divides $\ord_{f_d \cdot e_{d'}} p$.  Write 
\[
\ord_{f_d \cdot e_{d'}} p = m \cdot \ord_{f_d \cdot e_{d'}} q.
\]
For each prime $\p'$ of $F_{d,d'}$ lying over $\p$, fix an $m$-th root of $\lambda_{d,d'}(\p')$, and let $\mu_m$ denote the group of $m$-th roots of unity.  Then we have the following corollary.

\begin{cor}\label{equality of setsgen}
For primes $\p$ of $F$ as above, we have an equality of sets of $\frac{\varphi(f_d)\varphi(e_{d'})}{\ord_{f_d \cdot e_{d'}} q}$ elements:
\[
\left\{Z_{(c,c')}\  \middle|\ 1 \leq c \leq \frac{\varphi(f_d)}{\ord_{f_d} q}, 1 \leq c' \leq \frac{\varphi(e_{d'})}{\ord_{e_{d'}} q}\right\} =\left \{\zeta_m\sqrt[m]{\lambda_{d,d'}(\p')} \ \middle|\  \p' | \p, \zeta_m \in \mu_m\right\},
\]
where in the second set $\p'$ runs over primes of $F_{d,d'}$ lying over $\p$.  Furthermore, if $F$ is totally real then $(Z_b)^m = \lambda_{d,d'}(\p')$ implies $(Z_{-b})^m = \overline{\lambda}_{d,d'}(\p')$.
\end{cor}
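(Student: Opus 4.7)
The plan is to extract the set equality by comparing two descriptions of the local $L$-factor at $\p$. On one hand, Corollary \ref{localLfactor1gen} gives
\[
L_\p(A_{d,d'}/F,s) = \prod_{c,c'}\left(1 - Z_{(c,c')}q^{-s\ord_{f_d\cdot e_{d'}} q}\right).
\]
On the other hand, using $L(A_{d,d'}/F,s)=L(\lambda_{d,d'},s)$ from \eqref{equality of Lfnsgen} and reading off the Euler factors over $\p$ as in \eqref{LfnOfPsigen}, the same local $L$-factor equals
\[
\prod_{\p'\mid \p}\bigl(1 - \lambda_{d,d'}(\p')q^{-s\ord_{f_d\cdot e_{d'}} p}\bigr).
\]
Setting $T = q^{-s\ord_{f_d\cdot e_{d'}} q}$ and using $\ord_{f_d\cdot e_{d'}} p = m\cdot\ord_{f_d\cdot e_{d'}} q$, I would factor each factor on the second side as $\prod_{\zeta_m\in\mu_m}\bigl(1 - \zeta_m\sqrt[m]{\lambda_{d,d'}(\p')}\, T\bigr)$, giving an equality of polynomials in $T$ whose roots are, on the one hand, the reciprocals of $Z_{(c,c')}$ and, on the other, the reciprocals of $\zeta_m\sqrt[m]{\lambda_{d,d'}(\p')}$.

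Next I would verify the cardinalities match, which confirms both sides are polynomials of the same degree and hence the multisets of roots coincide. The left-hand product has $\frac{\varphi(f_d)\varphi(e_{d'})}{\ord_{f_d\cdot e_{d'}} q}$ factors by Corollary \ref{localLfactor1gen}. For the right-hand side, since $F$ is linearly disjoint from $K_{d,d'}$, the prime $\p$ splits into $\varphi(f_d\cdot e_{d'})/\ord_{f_d\cdot e_{d'}} p$ primes in $F_{d,d'}$, and each factor $\p'$ contributes $m$ roots, for a total of $\varphi(f_d\cdot e_{d'})/\ord_{f_d\cdot e_{d'}} q$ roots; the identity $\varphi(f_d\cdot e_{d'}) = \varphi(f_d)\varphi(e_{d'})$ (which holds because $(e,f)=1$, hence $(f_d,e_{d'})=1$) matches these. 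Unique factorization of polynomials in $\C[T]$ then yields the desired set equality.

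For the final claim, when $F$ is totally real, I would invoke Proposition \ref{complex conjugation action}, which states $\overline{Z_b} = Z_{-b}$. Raising this identity to the $m$-th power and using that complex conjugation is a ring homomorphism immediately gives that if $(Z_b)^m = \lambda_{d,d'}(\p')$, then $(Z_{-b})^m = \overline{(Z_b)^m} = \overline{\lambda_{d,d'}(\p')}$. I do not expect any serious obstacle here: the main point is bookkeeping of the $m$-fold splitting behavior of $\p$ in $F_{d,d'}$ versus its residue degree, and checking that $\varphi(f_d\cdot e_{d'}) = \varphi(f_d)\varphi(e_{d'})$ so that the two polynomials really do have the same degree, allowing one to conclude via unique factorization.
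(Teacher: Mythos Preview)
Your proposal is correct and follows essentially the same approach as the paper: the paper's proof simply says the first statement ``follows directly from Corollary \ref{localLfactor1gen} and equations \eqref{equality of Lfnsgen} and \eqref{LfnOfPsigen},'' and your argument just unpacks this by matching the two polynomial expressions in $T=q^{-s\ord_{f_d\cdot e_{d'}}q}$ and checking degrees. For the second statement the paper likewise invokes Proposition \ref{complex conjugation action} exactly as you do.
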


\begin{proof}
The first statement follows directly from Corollary \ref{localLfactor1gen} and equations \eqref{equality of Lfnsgen} and \eqref{LfnOfPsigen}.  For the second equality, we have $\lambda_{d,d'}(\p') = (Z_b)^m$ implies
\[
\overline{\lambda}_{d,d'}(\p') = \overline{(Z_b)}^m = Z_{-b}^m
\]
by Proposition \ref{complex conjugation action}.
\end{proof}

\section{Constructing a motive attached to powers of $\lambda$}\label{constructing a motive}
Let $\lambda = \lambda_{d,d'}$ be the Hecke character attached to $A_{d,d'}$ obtained as in the previous section.  Fix a positive integer $n$ and $n/2 < a \leq n$.  The goal of this section is to construct a Chow motive $M_{d,d'}$ defined over $F$ such that $L^{(S_{d,d'})}(M_{d,d'}/F, s) = L^{(S_{d,d'})}(\lambda^a\overline{\lambda}^{n-a}, s)$.  In \S \ref{motives background} we introduce some notation and recall some background about motives.  The key ingredient in the construction of $M_{d,d'}$ is a group action on $C^n$.  We describe the action in \S \ref{group action} and then construct $M_{d,d'}$ using the invariants of the group action and the idempotent $e_{d,d'}$ defined right before Proposition \ref{decomposition of Jacobian gen}.  Using an explicit calculation of the Betti realization of $M_{d,d'}$ (Corollary \ref{betti motive basis}), we compute the $L$-function of $M_{d,d'}$ (Proposition \ref{localLfactor2gen}) and match it with that of $\lambda_{d,d'}^a\overline{\lambda}_{d,d'}^{n-a}$ (Corollary \ref{Lfn equality}).  Finally, Section \ref{literature} summarizes the relationship between the motives $M_{d,d'}$ constructed in this paper and other motives associated to Hecke characters in the literature, especially the standard motive of a Hecke character as found in \cite[\S I.4]{Schappacher}.  

\subsection{Background on motives}\label{motives background}
Let $k$ be a number field.  We begin by briefly recalling the construction of the category of Chow motives over $k$, closely following \cite{Andre}.  Let $\V_k$ denote the category of smooth projective varieties over $k$.  To any object $Z$ in $\V_k$, let $\cZ^n(Z)$ denote the group of algebraic cycles in $Z$ of codimension $n$ and $\cZ^n(Z)_\Q = \cZ^n(Z) \otimes_\Z \Q$.  If $\sim$ is an adequate equivalence relation on algebraic cycles \cite[Definition 3.1.1.1]{Andre}, write $\cZ_\sim^n(Z)_\Q = \cZ^n(Z)_\Q/\sim$.   

For a fixed adequate equivalence relation as above, define a category $\CV^\sim_k$ whose objects are smooth projective varieties and 
\[
\Hom_{\CV^\sim_k}(X, Y) \coloneqq \cZ_\sim^{\dim X}(X \times_k Y)_\Q.
\]
See \cite[\S 3.1.3]{Andre} for the definition of composition of morphisms.  The category $\CV^\sim_k$ is a tensor category.  In particular, it is an additive category with a tensor product structure that is symmetric.  The category of motives $\M^\sim_k$ is defined to be the pseudoabelian closure of $\CV^\sim_k$.  

Objects in $\M_k^\sim$ can be represented explicitly as triples $(X, p, m)$, where $X$ is a smooth projective variety, $p \in \End_{\CV^\sim_k}(X)$ such that $p^2=p$, and $m \in \Z$.  Morphisms of motives are given by 
\[
\Hom_{\M^\sim_k}((X, p, n), (Y, q, m)) = q \circ \cZ_{\sim}^{\dim X -n+m}(X \times_k Y)_\Q \circ p.
\]
Furthermore, there is a natural contravariant functor $h : \V_k \to \M_k$ given by $h(X) = (X, \Delta_X, 0)$, where $\Delta_X$ is the diagonal subvariety of $X \times_k X$.

The adequate equivalence relations $\sim$ that will be of interest for us are rational $\sim_{\rat}$ \cite[\S 3.2.2]{Andre}, homological $\sim_{\mhom}$ \cite[\S 3.3.4]{Andre}, and numerical $\sim_{\num}$ \cite[\S 3.2.7]{Andre} equivalence, listed here in decreasing order of fineness.  The corresponding categories of motives will be denoted by $\M^{\rat}_k$, $\M^{\hom}_k$, and $\M^{\num}_k$.  The objects in $\M^{\rat}_k$ are called Chow motives and they are universal in the sense that $\sim_{\rat}$ is the finest possible adequate equivalence relation \cite[Lemme 3.2.2.1]{Andre}.   

\subsubsection{The motive $h^1$ of a curve}\label{h1 background}  Let $C \in \V_k$ be a geometrically connected curve, and assume that $C$ has a $k$-rational point $P$.  Let $p_0$ be the cycle on $C\times C$ given by $\{P\} \times C$ and $p_2$ the cycle given by $C \times \{P\}$.  Then $p_0$ and $p_2$ are idempotent for any choice of adequate equivalence relation.  Define $p_1 = 1 - p_0 - p_2$, which is also idempotent.  Let $h^1(C) = (C, p_1, 0)$; that is, $h^1(C)$ is the image of $p_1$.  It is well defined up to unique isomorphism and we have \cite[Proposition 3.3]{Scholl}:
\[
\End_{\M_k^\sim}(h^1(C)) = \End_k^0(\Jac C).
\]
In particular, $\End_{\M_k^\sim}(h^1(C))$ is independent of the choice of adequate equivalence relation $\sim$.

Note that since $\Jac C$ is isogenous to a product of isotypic abelian varieties, $\Jac C \sim \prod_{i \in I} A_i$, it follows that there is an orthogonal system of idempotents $\{e_i\}_{i \in I} \in \End_{\M_k^\sim}(h^1(C))$ corresponding to the decomposition of $\Jac C$.  For any positive integer $n$ it follows that $\{e_{i_1} \otimes \cdots \otimes e_{i_n} : i_j \in I\}$ is an orthogonal system of idempotents in $\End_{\M_k^\sim}(h^1(C)^{\otimes n})$.  In particular, when $C$ is one of the Weil curves introduced in Section \ref{weil curves} we have an orthogonal system of idempotents in $\End_{\M_F^{\sim}}(h^1(C))$ corresponding to the $e_{d,d'}$ introduced prior to Proposition \ref{decomposition of Jacobian gen}.  We continue to write $e_{d,d'}$ for the corresponding element of $\End_{\M_k^{\sim}}(h^1(C))$.  Furthermore, we can view $e_{d,d'}$ as an element in $\End_{\M_k^\sim}(h(C))$ by extending by $0$ on $h^0(C) := (C, p_0, 0)$ and $h^2(C) := (C, p_2, 0)$.  

\subsubsection{Realization functors} There are covariant functors on $\M_k^{\rat}$ corresponding to the various classical Weil cohomology theories.  For example, the Betti realization is a functor
\[
H_B^*(\cdot, \Q) \colon \M_k^{\rat} \to \Q\text{-}\HS.
\]
For any $X \in \V_k$ we have $H^*_B(h(X), \Q) = H^*_B(X, \Q)$.  Write $k$-$\FVS$ for the category of finite-dimensional $k$-vector spaces with a filtration.  The de Rham realization is a functor
\[
H_{\dR}^*\colon \M_k^{\rat} \to k\text{-}\FVS.
\]
For any $X \in \V_k$ we have $H^*_{\dR}(h(X)) = H_{\dR}^*(X)$, where $H_{\dR}^*(X)$ denotes the algebraic de Rham cohomology of the $k$-scheme $X$.  For any rational prime number $\ell$, let $\Rep_{\Q_\ell}(G_k)$ denote the category of continuous representations of $G_k$ on finite dimensional $\Q_\ell$-vector spaces.  The $\ell$-adic realization is a functor
\[
H_{\et}^*(\cdot, \Q_\ell) \colon \M_k^{\rat} \to \Rep_{\Q_\ell}(G_k).
\]
For any $X \in \V_k$ we have $H^*_{\et}(h(X), \Q_\ell) = H_{\et}^*(X_{\overline{k}}, \Q_\ell)$.  If this system of $\ell$-adic representations is compatible, then we define the $L$-function of a Chow motive as the $L$-function of its $\ell$-adic realizations. 

\subsubsection{Galois descent} There is a theory of Galois descent for motives.  Let $k/k'$ be a finite Galois extension.  There is a base change functor $\mathord{-} \times_{k'} k \colon \M^\sim_{k'} \to \M^\sim_k$ as well as a Weil restriction of scalars functor $\Res_{k/k'} \colon \M_k^\sim \to \M_{k'}^\sim$.  We shall say that a motive $M \in \M_k^\sim$ \textit{descends} to $k'$ if there exists $M' \in \M_{k'}^\sim$ such that $M = M' \times_{k'} k$.  Given $M \in \M_{k'}^\sim$, the motive $M \times_{k'} k$ has a natural action of $\Gal(k/k')$.  A submotive $N$ of $M \times_{k'} k$ descends to $k$ if and only if it is stable under the action of $\Gal(k/k')$ on $M \times_{k'} k$ \cite[1.16, Lemma 1.17]{Scholl}.

\subsubsection{Group actions on varieties} There is a way to construct a motive $h(X)^G \in \M_k^{\rat}$ from the action of a finite group $G$ on a smooth projective variety $X \in \V_k$, assuming that the action of $G$ on $X$ is defined over $k$.  Explicitly, this motive can be written as
\begin{equation}\label{G-invariant motive}
h(X)^G = (X, \frac{1}{|G|}\sum_{g \in G}\Gamma_g, 0),
\end{equation}
where $\Gamma_g \in \cZ_{\rat}^{\dim X}(X \times_k X)_\Q$ is the transpose of the graph of the automorphism $g$ \cite[proof of Proposition 1.2]{Rollin-Aznar}.  In coordinates,
\[
\Gamma_g = \{(gx, x) \in X \times_k X \}.
\]
Thus $\Gamma_{1_G} = \Delta_X$.  Furthermore, this construction behaves well with respect to realizations in the following sense.  Each realization of $X$ inherits an action of $G$.  Any realization of $h(X)^G$ is just the $G$-invariant vectors in the corresponding realization of $X$.  Finally, the motive $h(X)^G$ descends to a subfield $k' \subseteq k$ if both $X$ and the cycle $\frac{1}{|G|}\sum_{g \in G} \Gamma_g$ descend to $k'$.

\subsection{The group action on $C^n$ and the Chow motives}\label{group action}
We now return to the notation from Section \ref{weil curves}.  Recall that for proper divisors $d|f$, $d'|e$, the abelian variety $A_{d,d'}$ is defined over $F$ and has CM by $K_{d,d'}$ after base change to $F_{d,d'}$. Consider the $n$-fold product $C_{F_{1,1}}^n$. Fix another positive integer $a$ such that $n/2 <a \le n$.   Everything that follows will depend on the choice of integers $a$ and $n$, though this will not be reflected in the notation.  Consider the subgroup $G$ of $\Aut C_{F_{1,1}}^n$ given by
\[
G = \left\{\prod_{k = 1}^n \psi_f^{u_k}\psi_e^{v_k}\  \middle| \  \begin{subarray}{c}u_1 + \cdots + u_a - u_{a + 1} - \cdots - u_n \equiv 0 \bmod f,\\  v_1 + \cdots + v_a - v_{a + 1} - \cdots - v_n \equiv 0 \bmod e\end{subarray}\right\},
\]
where the $k$-th factor $\psi_f^{u_k}\psi_e^{v_k}$ in the product $\prod_{k = 1}^n \psi_f^{u_k}\psi_e^{v_k}$ acts on the $k$-th factor in the product $C_{F_{1,1}}^n$.  Note that we have an isomorphism of groups $G \cong (\Z/f\Z \times \Z/e\Z)^{n - 1}$.  Write 
\[\varepsilon = \frac{1}{|G|}\sum_{g \in G} \Gamma_g \in \End_{\M_{F_{1,1}}^{\rat}}(h(C^n_{F_{1,1}}))\]
 for the idempotent cutting out $h(C_{F_{1,1}}^n)^G$.  We now use Galois descent to produce a motive defined over the field $F$ instead of $F_{1,1}$. 

\begin{lem}\label{galois descent of our motive}
There is a motive $M \in \M^{\rat}_F$ such that $M \times_F F_{1,1} = h(C^n_{F_{1,1}})^G$.
\end{lem}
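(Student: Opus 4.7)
The plan is to apply Galois descent for Chow motives \cite[1.16, Lemma 1.17]{Scholl}. Since $C$, and therefore $C^n$, is already defined over $F$, the variety $C^n_{F_{1,1}}$ is the base change of the $F$-variety $C^n$. It thus suffices to show that the idempotent $\varepsilon \in \End_{\M_{F_{1,1}}^{\rat}}(h(C^n_{F_{1,1}}))$ is fixed by the natural action of $\Gal(F_{1,1}/F)$. Because $F$ is linearly disjoint from $K$, restriction identifies $\Gal(F_{1,1}/F)$ with $\Gal(K/\Q) \cong (\Z/f\Z)^\times \times (\Z/e\Z)^\times$, where an element $\sigma$ corresponds to the pair of cyclotomic characters $(\chi_f(\sigma), \chi_e(\sigma))$ satisfying $\sigma(\zeta_f) = \zeta_f^{\chi_f(\sigma)}$ and $\sigma(\zeta_e) = \zeta_e^{\chi_e(\sigma)}$.

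The first step is to determine how $\Gal(F_{1,1}/F)$ acts on the generators of $G$. Because $\psi_f$ and $\psi_e$ are given by explicit formulas whose only non-$F$-rational coefficients are $\zeta_f$ and $\zeta_e$, transport of structure by $\sigma$ simply replaces these roots of unity by their Galois conjugates, giving $\sigma(\psi_f) = \psi_f^{\chi_f(\sigma)}$ and $\sigma(\psi_e) = \psi_e^{\chi_e(\sigma)}$. Hence for $g = \prod_{k=1}^n \psi_f^{u_k}\psi_e^{v_k} \in G$,
\[
\sigma(g) = \prod_{k=1}^n \psi_f^{\chi_f(\sigma)u_k}\psi_e^{\chi_e(\sigma)v_k}.
\]
Since $\chi_f(\sigma) \in (\Z/f\Z)^\times$ and $\chi_e(\sigma) \in (\Z/e\Z)^\times$, multiplication by these units preserves the two defining congruences of $G$, so $\sigma(g) \in G$. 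Thus $\sigma$ restricts to a permutation of $G$.

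The second step is to transfer this information to the cycle $\varepsilon$. Applying $\sigma$ to the graph cycle $\Gamma_g = \{(gx, x)\} \subset (C^n \times C^n)_{F_{1,1}}$ replaces $g$ by $\sigma(g)$, so $\sigma(\Gamma_g) = \Gamma_{\sigma(g)}$. Summing over $g \in G$ and using that $\sigma$ permutes $G$,
\[
\sigma(\varepsilon) = \frac{1}{|G|}\sum_{g \in G} \Gamma_{\sigma(g)} = \frac{1}{|G|}\sum_{h \in G}\Gamma_h = \varepsilon.
\]
Therefore $\varepsilon$ is $\Gal(F_{1,1}/F)$-invariant, and the descent criterion produces the desired motive $M \in \M_F^{\rat}$ with $M \times_F F_{1,1} = (C^n_{F_{1,1}}, \varepsilon, 0) = h(C^n_{F_{1,1}})^G$.

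The only place requiring genuine thought is the first step, namely pinning down the Galois action on $\psi_e$ and $\psi_f$ and checking that multiplication by a unit in $\Z/f\Z$ (respectively $\Z/e\Z$) preserves the signed-sum congruence defining $G$; both are elementary. Once these are in hand, the lemma follows immediately from the standard descent formalism.
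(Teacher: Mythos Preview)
Your proposal is correct and follows essentially the same approach as the paper: both reduce to showing the idempotent $\varepsilon$ (equivalently, the cycle $\sum_{g\in G}\Gamma_g$) is $\Gal(F_{1,1}/F)$-stable, and both do so by observing that Galois conjugation sends $\psi_f,\psi_e$ to unit powers of themselves and hence permutes $G$. The only difference is cosmetic: the paper verifies $\sigma(\psi_f)=\psi_f^{\chi_f(\sigma)}$ and $\sigma(\psi_e)=\psi_e^{\chi_e(\sigma)}$ by an explicit coordinate computation on the normalization, whereas you invoke transport of structure directly.
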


\begin{proof}
Recall that the motive $h(C^n_{F_{1,1}})^G$  consists of the data  $(C^n_{F_{1,1}}, \varepsilon, 0)$.  Since the curve $C$ is defined over the field $F$, it suffices to check that
\[
\sum_{g \in G} \Gamma_g \subset (C^n \times_F C^n) \times_F F_{1,1}
\]
is $\Gal(F_{1,1}/F)$-stable. 

Let $[a:b:c]$ be projective coordinates of the curve $C$. Recall that $C$ is the normalization of the singular projective curve $X$ given by $(y')^ez'^{f-e}=\gamma (x')^f+\delta (z')^f$.  After base change to $F_{1,1}$ we have $\psi_f([x' : y' : z']) = [\zeta_f x' : y' : z']$ and 
$\psi_e([x ': y' : z']) = [x' : \zeta_e y' : z']$. Since resolving the singular point $[0:1:0]$ of $X$ will result in new coordinates obtained as rational functions in the variables $x',y',z'$, both $\psi_f$ and $\psi_e$ will act by powers of $\zeta_e$ and $\zeta_f$, respectively, on the coordinates $a$, $b$, and $c$ of $C_{F_{1,1}}$. 

Hence there are integers $m_i$, $n_j$ for $1\le i,j \le 3$ such that for any point $[a:b:c]$ of $C_{F_{1,1}}$ we have $\psi_f([a:b:c])=[\zeta_f^{m_1}a:\zeta_f^{m_2}b:\zeta_f^{m_3}c]$ and $\psi_e([a:b:c])=[\zeta_e^{n_1}a:\zeta_e^{n_2}b:\zeta_e^{n_3}c]$.

Consider the element $g_0= \prod_{k = 1}\psi_f^{u_k}\psi_e^{v_k} \in G$. Then the class $\Gamma_{g_0} \subset (C^n \times_F C^n) \times_F F_{1,1}$ is given by
\[
\Gamma_{g_0} = \left\{\prod_{k=1}^n[\zeta_f^{m_1u_k}\zeta_e^{n_1v_k}a_k: \zeta_f^{m_2u_k}\zeta_e^{n_2v_k}b_k: \zeta_f^{m_3u_k}\zeta_e^{n_3v_k}c_k] \times \prod_{k=1}^n[a_k: b_k:c_k]\ \middle| \  [a_k: b_k:c_k] \in C_{F_{1,1}}\right\}.
\]
Let $\tau \in \Gal(F_{1,1}/F) \cong \Gal(\Q(\zeta_f,\zeta_e)/\Q)$.  Say $\zeta_f^\tau = \zeta_f^i$ and $\zeta_e^\tau = \zeta_e^j$.  Then for an element $\alpha=\prod_{k=1}^n[\zeta_f^{u_km_1}\zeta_e^{v_kn_1}a_k: \zeta_f^{u_km_2}\zeta_e^{v_kn_2}b_k: \zeta_f^{u_km_3}\zeta_e^{v_kn_3}c_k] \times \prod_{k=1}^n[a_k: b_k:c_k]$ of  $\Gamma_{g_0}$, we have 
\begin{align*}
\alpha^\tau&=
\prod_{k=1}^n[\zeta_f^{u_km_1}\zeta_e^{v_kn_1}a_k: \zeta_f^{u_km_2}\zeta_e^{v_kn_2}b_k: \zeta_f^{u_km_3}\zeta_e^{v_kn_3}c_k]^\tau\times \prod_{k=1}^n[a_k: b_k:c_k]^\tau\\
&=\prod_{k=1}^n[\zeta_f^{iu_km_1}\zeta_e^{jv_kn_1}a_k^\tau: \zeta_f^{iu_km_2}\zeta_e^{jv_kn_2}b_k^\tau: \zeta_f^{iu_km_3}\zeta_e^{jv_kn_3}c_k^\tau] \times \prod_{k=1}^n[a_k^\tau: b_k^\tau:c_k^\tau].
\end{align*}
Namely, we have $\alpha^\tau\in \Gamma_{g'}$, where $g' = \prod_{k = 1}^n \psi_f^{iu_k}\psi_e^{jv_k} \in G$.  Thus $\sum_{g \in G} \Gamma_g$ is $\Gal(F_{1,1}/F)$-stable, as desired.
\end{proof}

We can now define the Chow motive associated to $\lambda_{d,d'}^a\overline{\lambda}_{d,d'}^{n-a}$.  Let $E_{d,d'} = e_{d,d'}^{\otimes n} \in \End_{\M_F^{\rat}}(h(C^n))$ as in Section \ref{h1 background}.  Then by Lemma \ref{galois descent of our motive} we may compose $E_{d,d'}$ with $\varepsilon$ in $\End_{\M_F^{\rat}}(h(C^n))$.  Define
\[
M_{d,d'} = (E_{d,d'} \circ \varepsilon)(h(C^n)) \in \M_F^{\rat},
\]
and define
\begin{equation}\label{definition of M}
\tilde{M} = \bigoplus_{d,d'} M_{d,d'} = (\sum_{d,d'} E_{d,d'} \circ \varepsilon)(h(C^n)) \in \M_F^{\rat}.
\end{equation}
We reiterate that $M_{d,d'}$ depends on the choice of $a$ and $n$ even though this is not reflected in the notation.

\subsection{Computing the cohomology of the motives $M_{d,d'}$}
We begin by studying the Betti realization of $M$.  Once we understand this, we will then determine its decomposition with respect to the idempotents induced by the $E_{d,d'}$.  We begin by establishing some notation.  Let $\pi_k : C^n \to C$ be the natural projection map onto the $k$-th copy of $C$ in $C^n$.  Write $\Omega$ for the fundamental class of $C$ and $\Omega_k = \pi_k^*(\Omega)$.  In particular, $\Omega$ represents a nonzero class in $H^{1,1}(C) = H^2_B(C, \C)$ under the Betti-de Rham comparison isomorphism.  Let $\omega_{k, i, j} = \pi_k^*(\omega_{i,j})$ for all $1 \leq i \leq I, 1 \leq j \leq J$.  For the same range of $i, j$, define
\[
\Sigma_{i,j} = \omega_{1, i, j}\cdots\omega_{a, i, j}\bar{\omega}_{a+1, i, j}\cdots\bar{\omega}_{n, i, j}.
\]
As with the $\omega_{i,j}$, for $1 \leq i \leq I, 1 \leq j \leq J$, define
\[
\Sigma_{f-i,e-j} = \overline{\Sigma}_{i,j} = \bar{\omega}_{1,i,j}\cdots\bar{\omega}_{a,i,j}\omega_{a+1,i, j}\cdots\omega_{n,i,j}.
\]
Following the conventions established with $\omega_{i,j}$, we denote by $\Sigma_{i,j}$ the image of  $\Sigma_{i,j}$ under the standard comparison isomorphisms between Betti, deRham, and $\ell$-adic cohomology theories. 

The following proposition is due to Schreieder in the case when the curve $C$ is of the form $y^2 = x^{2g+1} + 1$ \cite[Lemma 8]{schreieder}.  The proof we give below is a straightforward generalization of his argument.

\begin{prop}\label{two pieces of cohomologygen}
\begin{enumerate}
\item The Hodge decomposition of the $\Q$-Hodge structure $H^*_B(M,\Q)=H^*_B(C^n, \Q)^G$ has the form
\[
H^*(C^n,\C)^G = V^{a, n-a} \oplus V^{n-a, a} \oplus \left(\bigoplus_{p=0}^n V^{p,p} \right).
\]
\item The subspace $V^{p,p}$ consists of all $G$-invariant homogeneous polynomials of degree $p$ in the classes $\Omega_1, \ldots, \Omega_n$.   
\item The set $\{\Sigma_{i,j} : 1 \leq i \leq f-1, 1 \leq j \leq e-1\}$ is a $\C$-basis for $V^{a, n-a} \oplus V^{n-a, a}$. 
\end{enumerate}
\end{prop}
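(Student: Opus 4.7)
The plan is to use the K\"unneth decomposition of $H^*(C^n,\C)$ together with the known eigenvalues of $\psi_e^*$ and $\psi_f^*$ on $H^*(C,\C)$ to reduce $G$-invariance to a character condition on $(\Z/f\Z)^n \times (\Z/e\Z)^n$, and then solve that condition by Pontryagin duality.

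First I would fix the basis $\{1\} \cup \{\omega_{i,j} : 1 \le i \le f-1,\ 1 \le j \le e-1\} \cup \{\Omega\}$ of $H^*(C,\C)$, each vector of which is a simultaneous eigenvector for $\psi_e^*$ and $\psi_f^*$, with $1$ and $\Omega$ fixed. The K\"unneth formula then gives a basis of $H^*(C^n,\C)$ indexed by tuples $(\alpha_1, \ldots, \alpha_n)$ drawn from this list. By \eqref{eigenvalues}, the element $g = \prod_k \psi_f^{u_k}\psi_e^{v_k} \in G$ acts on the corresponding tensor by the scalar $\prod_{k \in S} \zeta_f^{i_k u_k}\zeta_e^{-j_k v_k}$, where $S \subseteq \{1, \ldots, n\}$ records the slots in which $\alpha_k = \omega_{i_k, j_k}$ (on the other slots, where $\alpha_k \in \{1, \Omega\}$, the action is trivial).

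The crux is the character computation. $G$-invariance amounts to requiring that the character $(u_k) \mapsto \sum_{k \in S} i_k u_k$ vanish on the index-$f$ subgroup of $(\Z/f\Z)^n$ cut out by $\sum_{k=1}^a u_k - \sum_{k=a+1}^n u_k \equiv 0 \bmod f$, and similarly for the $j_k$. The Pontryagin dual of this subgroup is cyclic of order $f$, generated by $(u_k) \mapsto \sum_{k=1}^a u_k - \sum_{k=a+1}^n u_k$, so there is some $c \in \Z/f\Z$ with $\sum_{k \in S} i_k u_k \equiv c\bigl(\sum_{k=1}^a u_k - \sum_{k=a+1}^n u_k\bigr) \bmod f$ identically in the $u_k$. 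Comparing coefficients of each $u_k$ and using $1 \le i_k \le f-1$ forces either $c = 0$ and $S = \emptyset$, or $c \ne 0$ with $S = \{1, \ldots, n\}$, $i_k = c$ for $k \le a$, and $i_k = f - c$ for $k > a$. The identical argument for the $j_k$ yields some $c' \in (\Z/e\Z)^\times$ with $j_k = c'$ for $k \le a$ and $j_k = e - c'$ for $k > a$.

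Assembling this, I would then read off the three parts. The $S = \emptyset$ invariants are tensors with each $\alpha_k \in \{1, \Omega\}$; these correspond to squarefree monomials $\Omega_{k_1} \cdots \Omega_{k_p}$ of degree $p$ in the $\Omega_k$'s, all of which are $G$-invariant since $G$ fixes each $\Omega_k$, giving the $V^{p,p}$ and proving (2). The $S = \{1, \ldots, n\}$ invariants, setting $i = c$ and $j = c'$, are precisely the tensors $\omega_{i,j}^{\otimes a} \otimes \omega_{f-i,e-j}^{\otimes(n-a)} = \Sigma_{i,j}$, of Hodge type $(a, n-a)$ or $(n-a, a)$ depending on whether $(i,j)$ lies in the index range for $\Sigma_{i,j}$ or $\Sigma_{f-i, e-j}$; this proves (3). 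Combining the $V^{p,p}$'s with $V^{a, n-a} \oplus V^{n-a, a}$ then gives (1). The main obstacle is the character computation, particularly establishing the dichotomy between $c = 0$ and $c \ne 0$; the remainder is bookkeeping.
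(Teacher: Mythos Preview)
Your argument is correct and is a tidier repackaging of the paper's proof. Both proofs work with the K\"unneth basis of simultaneous $\psi_e^*,\psi_f^*$-eigenvectors and determine which basis tensors are $G$-invariant. The paper does this by hand: it fixes a monomial $\eta$, chooses particular test elements of $G$, and splits into cases ($t<n$ versus $t=n$, and within the latter uses the parity of $f$ through the restricted index range $1\le i\le I$, $1\le j\le J$ to derive the contradictions \eqref{contradiction congruence f odd}--\eqref{contradiction congruence f even}). You instead recognise $G$-invariance as a character condition and invoke duality: the characters of $(\Z/f\Z)^n$ trivial on the index-$f$ hyperplane are exactly the cyclic group generated by the defining linear form, so matching coefficients of each $u_k$ immediately yields the dichotomy $S=\emptyset$ or $S=\{1,\dots,n\}$ with $i_k\equiv\pm c$ determined by whether $k\le a$ or $k>a$. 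Because you index all of $H^1(C,\C)$ uniformly by $1\le i\le f-1$, $1\le j\le e-1$ (treating $\omega_{f-i,e-j}=\overline{\omega}_{i,j}$ on the same footing as $\omega_{i,j}$), the parity case split in the paper never arises; the constraint $1\le i_k\le f-1$ alone forces $c\ne 0$ and hence $S=\{1,\dots,n\}$. Two small wording fixes: what you call ``the Pontryagin dual of this subgroup'' is really its annihilator $H^\perp\cong\widehat{(\Z/f\Z)^n/H}$ in the dual, which is indeed cyclic of order $f$; and ``$c'\in(\Z/e\Z)^\times$'' should read ``$c'\in\Z/e\Z\setminus\{0\}$,'' since $e$ need not be prime.
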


\begin{proof}
Recall that for each $1 \leq k \leq n$, we have
$\psi_f^*(\omega_{k, i, j}) = \zeta_f^i\omega_{k, i, j}$ and  $\psi_e^*(\omega_{k,i,j}) = \zeta_e^{-j}\omega_{k, i, j}.$ 
Now, the cohomology ring $H^*(C^n,\C)$ is generated by the elements
$
\Omega_k, \omega_{k, i, j}, \overline{\omega}_{k, i, j}$ for $1 \leq k \leq n$, $ 1 \leq i \leq I,$ $1 \leq j \leq J$. Suppose there is a $G$-invariant class in $H^*(C^n,\C)$ that contains the monomial
\[
\eta = \Omega_{l_1} \cdots \Omega_{l_r} \omega_{k_1, i_1, j_1} \cdots \omega_{k_s,i_s,j_s} \overline{\omega}_{k_{s+1},i_{s+1},j_{s+1}} \cdots \overline{\omega}_{k_t,i_t,j_t},
\]
where, without loss of generality, we take $l_1\le \cdots \le l_r$,  $k_1\le\cdots \le k_s$, and $k_{s+1}\le \cdots \le k_t$.

Observe that the product of any $(1,0)$ and $(0,1)$ class of the factor $C_k$ lies in $H^{1,1}(C_k)$ and thus is a multiple of the fundamental class $\Omega_k$. Hence we may assume that the intersection
\[
\{k_\epsilon \mid 1 \leq \epsilon \leq s\} \cap \{k_\epsilon \mid s+1 \leq \epsilon \leq t\} = \emptyset.
\] 
Since $H^{2,0}(C_k)=0=H^{0,2}(C_k)$, we may assume that $k_1, \ldots, k_s$ are pairwise disjoint and the same for $k_{s+1}, \ldots, k_t$, so all of $k_1,\ldots, k_t$ are pairwise distinct. Moreover, since $H^{2,2}(C_k)=0$, the elements $l_1, \ldots, l_r$ are pairwise distinct.  Since $H^{2,1}(C_k)=0=H^{1,2}(C_k)$ we then have
\[
\{l_1, \ldots, l_r\} \cap \{k_\epsilon \mid 1 \leq \epsilon \leq t\} = \emptyset.
\]
That is, in the expression for $\eta$, all of $l_1, \ldots, l_r, k_1, \ldots, k_t$ are pairwise distinct. Namely we have $l_1< \cdots < l_r$,  $k_1<\cdots < k_s$, and $k_{s+1}< \cdots < k_t$.

Now note that the element $\psi_f^{u_1}\psi_e^{v_1} \times \cdots \times \psi_f^{u_n}\psi_e^{v_n}$ of the group $G$ acts on the monomial $\eta$ by multiplication by the scalar
\[
\zeta_f^{\mu_{u_1,\cdots, u_t}}\zeta_e^{\nu_{v_1,\cdots, v_t}},
\]
where $\mu_{u_1,\ldots,u_t}$ is of the form $i_1u_1 \pm \cdots \pm i_tu_t$ and $\nu_{v_1,\ldots,v_t}$ is of the form $j_1v_1\pm \cdots \pm j_tv_t$. 

First we show that $t<n$ implies that $t=0$. Indeed, begin by choosing values for $u_1,\ldots, u_t$ and $v_1,\ldots, v_t$ such that 
$\mu_{u_1,\ldots,u_t} \not\equiv 0 \bmod f$ and $\nu_{v_1,\ldots,v_t} \not\equiv 0 \bmod e$. Since $t<n$, we can still choose $u_{t+1}, \ldots, u_n$ and $v_{t+1}, \ldots, v_n$ such that 
\begin{align*}
u_1 + \cdots + u_a - u_{a+1} - \cdots - u_n &\equiv 0 \bmod f\\
v_1 + \cdots + v_a - v_{a+1} - \cdots - v_n &\equiv 0 \bmod e.
\end{align*}
Namely, the automorphism of $C_{F_{1,1}}^n$ given by $\psi_f^{u_1}\psi_e^{v_1} \times \cdots \times \psi_f^{u_n}\psi_e^{v_n}$ lies in the group $G$, but it acts non-trivially on the monomial $\eta$ when $t>0$. Since $\eta$ is $G$-invariant, we must have $t=0$ and $\eta = \Omega_{l_1} \cdots \Omega_{l_r}$.

Next suppose that $t=n$. By the pairwise distinctness of $l_1, \ldots, l_r, k_1, \ldots, k_t$, we know $r=0$.  We now show that in this case, we must have $i_1=\cdots=i_n$, $j_1=\cdots=j_n$, and either
\[
\{1, \ldots, a\} = \{k_1, \ldots, k_s\} \text{ or } \{1, \ldots, a\} = \{k_{s+1}, \ldots, k_n\}.
\]

Indeed, suppose $1\le \epsilon_1<\epsilon_2\le s$ are such that $k_{\epsilon_1}\le a$ and $k_{\epsilon_2}>a$. Then let $u_{\epsilon_1}=u_{\epsilon_2}=v_{\epsilon_1}=v_{\epsilon+2}=1$ and $u_{\epsilon}=v_{\epsilon}=0$ for all other $\epsilon \ne \epsilon_1,\epsilon_2$. Then the $n$-tuples $(u_1, \ldots, u_n)$  and $(v_1, \ldots, v_n)$ give rise to an element in $G$ with $\mu_{u_1,\ldots,u_n}=i_{\epsilon_1}+i_{\epsilon_2}$ and $\nu_{v_1,\ldots,v_n}=j_{\epsilon_1}+j_{\epsilon_2}$. But since the monomial $\eta$ must be $G$-invariant, it follows that
\begin{equation}\label{contradiction congruence f odd}
i_{\epsilon_1} + i_{\epsilon_2} \equiv 0 \bmod f
\end{equation}
\begin{equation} \label{contradiction congruence f even}
j_{\epsilon_1} + j_{\epsilon_2} \equiv 0 \bmod e.
\end{equation}
If $f$ is odd, then $1 \leq i_{\epsilon_1}, i_{\epsilon_2} \leq \frac{f-1}{2}$ and so \eqref{contradiction congruence f odd} is impossible.  If $f$ is even, then $1 \leq j_{\epsilon_1}, j_{\epsilon_2} \leq \frac{e-1}{2}$ and so \eqref{contradiction congruence f even} is impossible.  Namely, we have shown that there cannot exist $1\le \epsilon_1<\epsilon_2\le s$  with the property that $k_{\epsilon_1}\le a$ and $k_{\epsilon_2}>a$. Similarly, one checks that we cannot have $\epsilon_1, \epsilon_2 \in \{s+1, \ldots, n\}$ such that $k_{\epsilon_1} \leq a$ and $k_{\epsilon_2} > a$.  This proves that $\{1, \ldots, a\}$ is either $\{k_1, \ldots, k_s\}$ or $\{k_{s+1}, \ldots, k_n\}$.

It remains to show that $i_1 = \cdots = i_n$ and $j_1 =  \cdots = j_n$.  For any $\epsilon \leq a$, let $u_\epsilon = u_{a+1} = v_\epsilon=v_{a+1}=1$ and $u_\iota =v_\iota= 0$ for all $\iota \neq \epsilon, a+1$. As above, the $n$-tuples $(u_1, \ldots, u_n)$  and $(v_1, \ldots, v_n)$ give rise to an element in $G$ such that $i_\epsilon \equiv i_{a+1} \bmod f$ and $j_\epsilon \equiv j_{a+1} \bmod e$ since $\eta$ is $G$-invariant. But since $1\le i_\epsilon\le f-1$ and $1\le j_\epsilon\le e-1$, we have $i_\epsilon=i_{a+1}$ and $j_\epsilon=j_{a+1}$ for all $1\le \epsilon \le a$. Similarly, one checks that $i_a = i_\epsilon$ and $j_a = j_\epsilon$ for all $\epsilon > a$.  Therefore $i_1 = \cdots = i_n$ and $j_1 = \cdots = j_n$, as claimed.

Thus we have shown that the monomial $\eta$ must take one of the following forms:
\begin{align*}
&\Omega_{l_1} \cdots \Omega_{l_r}\\
&\Sigma_{i,j} \coloneqq \ \omega_{1, i, j} \cdots \omega_{a, i, j} \overline{\omega}_{a+1, i, j} \cdots \overline{\omega}_{n, i, j}\\
&\overline{\Sigma}_{i, j} \coloneqq \ \omega_{1, i, j} \cdots \omega_{n-a,i, j} \overline{\omega}_{n-a+1, i, j} \cdots \overline{\omega}_{n,i, j},
\end{align*}
with $1 \leq i \leq I, 1 \leq j \leq J$.  This proves the first two statements of the proposition.

Observe that all three types of forms are $G$-invariant. Moreover, for any $1\le i\le I$ and $1\le j\le J$, the form $\Sigma_{i,j}$ is of type $(a,n-a)$ and $\overline{\Sigma}_{i,j}$ is of type $(n-a,a)$ in the Hodge decomposition of $H^*_B(C^n,\C)^G$. Hence the $\C$-span of $\{\Sigma_{i,j} \mid 1 \leq i \leq I, 1 \leq j \leq J\}$ defines a $G$-invariant vector space $V^{a, n-a}$ of classes of type $(a, n-a)$, and the $\C$-span of $\{\overline{\Sigma}_{i, j} \mid 1 \leq i \leq I, 1 \leq j \leq J\}$ defines a $G$-invariant vector space $V^{n-a, a}$ of classes of type $(n-a, a)$.  The $\Sigma_{i,j}$ are linearly independent since they are tensor products of linearly independent elements.  Therefore $V^{a, n-a}$ and $V^{n-a, a}$ are $g$-dimensional and conjugate to each other by construction.
\end{proof}

\begin{cor}\label{betti motive basis}
The motive $\tilde{M}$ satisfies $H^*_B(\tilde{M}, \Q) \otimes \C = V^{a, n-a} \oplus V^{n-a, a}$.  Furthermore, a $\C$-basis for $H^*_B(M_{d,d'}, \Q) \otimes \C$ is
\[
\B_{d,d'} \coloneqq \{\Sigma_{i,j} \mid 1 \leq i \leq f-1, 1 \leq j \leq e-1, (i,f)=d, (j,e)=d'\}.
\]
\end{cor}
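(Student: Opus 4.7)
The plan is to chase through the idempotents in $\End_{\M_F^{\rat}}(h(C^n))$ and translate them onto the Betti realization using Proposition \ref{two pieces of cohomologygen}. The functoriality of the Betti realization with respect to morphisms in $\M_F^{\rat}$ (in particular, with respect to idempotents) gives
\[
H^*_B(\tilde M,\Q) \otimes \C \;=\; \Bigl(\sum_{d,d'} E_{d,d'}\Bigr)\bigl(H^*_B(C^n, \C)^G\bigr),
\qquad
H^*_B(M_{d,d'},\Q)\otimes \C \;=\; E_{d,d'}\bigl(H^*_B(C^n,\C)^G\bigr),
\]
and by Proposition \ref{two pieces of cohomologygen} the right-hand space decomposes as $V^{a,n-a} \oplus V^{n-a,a} \oplus \bigoplus_{p=0}^n V^{p,p}$. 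So it suffices to compute how each $E_{d,d'} = e_{d,d'}^{\otimes n}$ acts on each summand.

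First I would dispose of the $V^{p,p}$ parts. Recall from \S\ref{h1 background} that each $e_{d,d'}$ was defined as an endomorphism of $h^1(C)$ and then extended by $0$ on $h^0(C)$ and $h^2(C)$. By Proposition \ref{two pieces of cohomologygen}(2) the space $V^{p,p}$ is spanned by monomials in the fundamental classes $\Omega_k$, each of which lies in the Betti realization of $h^2(C_k)$. Consequently every such monomial sits in a tensor summand of $H^*_B(C, \C)^{\otimes n}$ that involves at least one $h^2$- or $h^0$-factor, and so $E_{d,d'}$ kills it. Hence $\sum_{d,d'} E_{d,d'}$ annihilates $\bigoplus_{p}V^{p,p}$ altogether.

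Next I would analyze $E_{d,d'}$ on the basis vectors $\Sigma_{i,j}$ of $V^{a,n-a}\oplus V^{n-a,a}$. By Proposition \ref{decomposition of Jacobian gen}(2), on $H^1_B(C,\C)$ the idempotent $e_{d,d'}$ is the projector onto $\mathrm{span}_\C B_{d,d'}$; it sends $\omega_{i',j'}$ to itself if $(i',f)=d$ and $(j',e)=d'$, and to $0$ otherwise. Every tensor factor of $\Sigma_{i,j}$ is either $\omega_{k,i,j}$ or $\bar\omega_{k,i,j} = \omega_{k,f-i,e-j}$, and these indices satisfy $(i,f)=(f-i,f)$ and $(j,e)=(e-j,e)$. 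Therefore $E_{d,d'}(\Sigma_{i,j})$ equals $\Sigma_{i,j}$ if $(i,f)=d$ and $(j,e)=d'$, and $0$ otherwise. Summing over $(d,d')$ gives the identity on $V^{a,n-a} \oplus V^{n-a,a}$, which proves the first claim; isolating a single pair $(d,d')$ exhibits $\B_{d,d'}$ as the claimed basis of $H^*_B(M_{d,d'},\Q)\otimes\C$.

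The only delicate point in the argument is the bookkeeping: one must verify that the $\Sigma_{i,j}$ indexed by $1 \leq i \leq f-1$, $1 \leq j \leq e-1$ really do exhaust a basis of $V^{a,n-a}\oplus V^{n-a,a}$, using that $\overline\Sigma_{i,j}=\Sigma_{f-i,e-j}$ together with the ranges for $I$ and $J$ given in both parities of $f$. Once this indexing identification is made, the action of $E_{d,d'}$ on each $\Sigma_{i,j}$ is transparent and the corollary follows. No deeper input is needed, so I do not expect a genuine obstacle beyond this cosmetic identification.
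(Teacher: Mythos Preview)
Your proposal is correct and follows essentially the same route as the paper's proof: both identify $H^*_B(M_{d,d'},\C)$ with the image of $E_{d,d'}$ inside $H^*_B(C^n,\C)^G$ and then read off the basis from Proposition~\ref{two pieces of cohomologygen} together with the description of $B_{d,d'}$ in Proposition~\ref{decomposition of Jacobian gen}. Your write-up is somewhat more explicit than the paper's---you spell out why the $V^{p,p}$ pieces vanish (via the extension of $e_{d,d'}$ by zero on $h^0$ and $h^2$) and you note the key fact $(i,f)=(f-i,f)$, $(j,e)=(e-j,e)$ ensuring that $\omega_{i,j}$ and $\bar\omega_{i,j}$ lie in the same $B_{d,d'}$---but these are exactly the points the paper's terser argument relies on implicitly.
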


\begin{proof}
The first statement follows from the second, so we just prove the second statement.  Note that $H_B^*(M_{d,d'}, \Q)$ consists of the classes in $H_B^*(C^n, \Q)^G$ coming from $(e_{d,d'}H^1(C, \Q))^{\otimes n} = H_B^1(A_{d,d'}, \Q)^{\otimes n}$.  Since $B_{d,d'}$ is a basis for $H_B^1(A_{d,d'}, \Q) \otimes \C$ by Proposition \ref{decomposition of Jacobian gen}, it follows that $B_{d,d'}^{\otimes n} \cap (H_B^*(M_{d,d'}, \Q) \otimes \C)$ is a basis for $H_B^*(M_{d,d'}, \Q)\otimes \C$.  Proposition \ref{two pieces of cohomologygen} implies that $\B_{d,d'} = B_{d,d'}^{\otimes n} \cap (H_B^*(M_{d,d'}, \Q) \otimes \C)$, which completes the proof of the corollary.
\end{proof}

\begin{prop}\label{localLfactor2gen} Fix a rational prime $\ell$.  For pairs $(d,d')$ such that $d|f$, $d\ne f$ and $d'|e$, $d\ne e$ we have:
\begin{enumerate}
\item Let $\p \nmid \ell$ be a prime of $F$ where $A_{d,d'}$ has good reduction, and let $q = \#\OK_F/\p$.  The characteristic polynomial of $\Frob_\p^*$ acting on $H^*_{\et}(M_{d,d'},\Q_\ell)$ is 
\[
\prod_{c = 1}^{\frac{\varphi(f_d)}{\ord_{f_d} q}}\prod_{c' = 1}^{\frac{\varphi(e_{d'})}{\ord_{e_{d'}} q}} \left(T^{\ord_{f_d \cdot e_{d'}} q} - (Z_{(c,c')})^a(Z_{-(c,c')})^{n-a}\right),
\]
where the index of $Z_{(c,c')}$ is viewed as an element of $(\Z/f_d\Z \times \Z/e_{d'}\Z)^\times$.  In particular, $H_{\et}^*(M_{d,d'}, \Q_\ell)$ is a compatible system of $G_F$-representations.  
\item The local $L$-factor at $\p$ of $L(H^*_{\et}(M_{d,d'}, \Q_\ell)/F, s)$ is 
\[
\prod_{c = 1}^{\frac{\varphi(f_d)}{\ord_{f_d} q}}\prod_{c' = 1}^{\frac{\varphi(e_{d'})}{\ord_{e_{d'}} q}} \left(1 - (Z_{(c,c')})^a(Z_{-(c,c')})^{n-a}p^{-s\ord_{f_d \cdot e_{d'}} q}\right).
\]
\end{enumerate}
\end{prop}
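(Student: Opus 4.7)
The proof plan is to compute the action of $\Frob_\p^*$ directly on the basis $\B_{d,d'}$ of the étale realization of $M_{d,d'}$ provided by Corollary \ref{betti motive basis}, and then invoke Lemma \ref{gpm}.

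First I would transfer the Betti basis $\B_{d,d'} = \{\Sigma_{i,j}\}$ to the $\ell$-adic realization. Since $\varepsilon$ and $E_{d,d'}$ come from algebraic cycles on $C^n \times_F C^n$, the various realization functors are compatible with them; so the étale-Betti comparison sends $\B_{d,d'}$ to a $\C$-basis of $H^*_{\et}(M_{d,d'}, \Q_\ell) \otimes_{\Q_\ell, \iota_\ell} \C$. This is the basis with respect to which I will write the Frobenius matrix.

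Next I would compute $\Frob_\p^*(\Sigma_{i,j})$ using the Künneth isomorphism for $C^n$ and Proposition \ref{Frob lem}. With the identifications $\bar\omega_s = \omega_{-s}$ (here $s = (i,j) \in I_{d,d'}$) and $\omega_{k,s} = \pi_k^*(\omega_s)$, I can write
\[
\Sigma_s = \omega_{1,s}\cdots\omega_{a,s}\,\omega_{a+1,-s}\cdots\omega_{n,-s}.
\]
Proposition \ref{Frob lem} gives $\Frob_\p^*(\omega_t) = a_{qt,t}\,\omega_{qt}$ (the unique nonzero entry in column $t$ is in row $\rho^{-1}(t) = qt$). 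Since Frobenius acts diagonally through Künneth, applying this factor-by-factor yields
\[
\Frob_\p^*(\Sigma_s) \;=\; a_{qs,s}^{\,a}\,a_{-qs,-s}^{\,n-a}\,\Sigma_{qs}.
\]
Thus $[\Frob_\p^*]_{\B_{d,d'}}$ is a generalized permutation matrix whose underlying permutation is again multiplication by $q$ on $I_{d,d'}$, so it has the same cycle structure as in Proposition \ref{Frob lem}: cycles of length $\ord_{f_d \cdot e_{d'}} q$.

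Then I would apply Lemma \ref{gpm}. For a cycle containing $b \in (\Z/f_d\Z)^\times \times (\Z/e_{d'}\Z)^\times$, the product of the nonzero scalars around that cycle is
\[
\prod_{i=0}^{c-1} a_{q^{i+1}b,\,q^i b}^{\,a}\,a_{-q^{i+1}b,\,-q^i b}^{\,n-a} \;=\; Z_b^{\,a}\,Z_{-b}^{\,n-a},
\]
by the definition of $Z_b$ in \eqref{defining Zs} (and the fact that $s \mapsto -s$ preserves orbits of multiplication by $q$). Indexing orbits by $(c,c')$ exactly as in Corollary \ref{characteristic poly}, this gives the characteristic polynomial claimed in part (1). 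Part (2) is then immediate from Definition \ref{local Lfactor variety}, since the dimension of $H^*_{\et}(M_{d,d'}, \Q_\ell)$ is $2g_{d,d'} = \varphi(f_d)\varphi(e_{d'})$ and the substitution $T = q^{-s}$ converts the characteristic polynomial into the desired local factor.

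Finally, for compatibility of the system $\{H^*_{\et}(M_{d,d'}, \Q_\ell)\}_\ell$, I would observe that the characteristic polynomial computed above has coefficients $Z_b^a Z_{-b}^{n-a}$ whose $m$-th power equals $\lambda_{d,d'}(\p')^a \overline{\lambda}_{d,d'}(\p')^{n-a}$ by Corollary \ref{equality of setsgen}; in particular these coefficients are algebraic numbers independent of $\ell$, and the formula above was derived without reference to $\ell$. The main obstacle I anticipate is the careful bookkeeping when identifying orbits under multiplication by $q$ on $(\Z/f_d\Z)^\times \times (\Z/e_{d'}\Z)^\times$ with the double index $(c,c')$, and making sure the pairing $s \leftrightarrow -s$ used for the conjugate factors does not redistribute cycles; both should work out because $s \mapsto -s$ commutes with $s \mapsto qs$.
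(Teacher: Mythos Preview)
Your proposal is correct and follows essentially the same approach as the paper: compute $\Frob_\p^*$ on the basis $\B_{d,d'}$ via K\"unneth and Proposition \ref{Frob lem}, observe the resulting matrix is a generalized permutation matrix with the same underlying permutation $\rho$, and apply Lemma \ref{gpm}. Your use of the explicit description $\rho^{-1}(t)=qt$ and $-s$ in place of the paper's $\rho^{-1}(t)$ and $(e,f)-t$ is purely notational, and you are somewhat more explicit than the paper about the comparison isomorphism and about why the resulting characteristic polynomial is independent of $\ell$.
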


\begin{proof}
We begin by computing the action of $\Frob_\p^*$ on the basis $\mathcal{B}_{d,d'}$. Recall that in Proposition \ref{Frob lem} we showed that the matrix $[\Frob_\p^*]_{B_{d,d'}}$ of $\Frob_\p^*$ acting on $H^1_{\et}(A_{d,d', \overline{F}}, \Q_\ell) \otimes_{\Q_\ell, \iota_\ell} \C$ with respect to the basis $B_{d,d'}$ is a generalized permutation matrix on $2g_{d,d'}=\varphi(f_d)\varphi(e_{d'})$ letters with corresponding permutation $\rho$ a product of $\frac{\varphi(f_d)\varphi(e_{d'})}{\ord_{f_d \cdot e_{d'}} q}$ disjoint cycles, each of length $\ord_{f_d \cdot e_{d'}} q$.  Recall the notation $I_{d, d'} = \{(i, j) \in \Z/f\Z \times \Z/e\Z \mid (i, f) = d, (j,e) = d'\}$.  For $s,t\in I_{d,d'}$, the generalized permutation matrix $[\Frob_\p^*]_{B_{d,d'}}$ is of the form $(a_{s,t})_{(s,t) \in I_{d,d'}}$ such that
\[
\Frob_\p^*(\omega_t) = a_{\rho^{-1}(t), t}\omega_{\rho^{-1}(t)}.
\]
Hence for $t \in I_{d,d'}$ we have
\begin{align*}
\Frob_\p^*(\Sigma_t) = &(a_{\rho^{-1}(t), t}\omega_{1, \rho^{-1}(t)}) \cdots (a_{\rho^{-1}(t), t}\omega_{a, \rho^{-1}(t)}) \\ 
&(a_{\rho^{-1}((e,f)-t), (e,f)-t}\omega_{a+1, \rho^{-1}((e,f)-t)}) \cdots (a_{\rho^{-1}((e,f)-t), (e,f)-t}\omega_{n, \rho^{-1}((e,f)-t)})\\
= &(a_{\rho^{-1}(t), t})^a(a_{\rho^{-1}((e,f)-t), (e,f)-t})^{n-a}\Sigma_{\rho^{-1}(t)}.
\end{align*}

The above calculation shows that the matrix $[\Frob_\p^*]_{\mathcal{B}_{d,d'}}$ of $\Frob_\p^*$ with respect to the basis $\mathcal{B}_{d,d'}$ of $H^*_{\et}(M_{d,d',\overline{F}}, \Q_\ell) \otimes_{\Q_\ell, \iota_\ell} \C$  is a generalized permutation matrix with associated permutation $\rho$. In particular, for $1 \leq c \leq \frac{\varphi(f_d)}{\ord_{f_d} q}$ and $1 \leq c' \leq \frac{\varphi(e_{d'})}{\ord_{e_{d'}} q}$, the product of all the nonzero entries in $[\Frob_\p^*]_{\mathcal{B}_{d,d'}}$ corresponding to the $(c,c')$-th disjoint cycle in $\rho$ is
\[
(Z_{(c,c')})^a(Z_{-(c,c')})^{n-a}.
\]
The result now follows from Lemma \ref{gpm}.  The second statement is a restatement of the first using the standard translation between characteristic polynomials and local $L$-factors.
\end{proof}

Recall that $S_{d,d'}$ is the set of primes of $F$ where $A_{d,d'}$ has bad reduction and $S = \cup_{d,d'} S_{d,d'}$.

\begin{cor}\label{Lfn equality}
Let $n\ge 1$ and $n/2<a\le n$, where we require $a=n$ if $F$ is not totally real. Then, we have an equality of (incomplete) $L$-functions:
\[
L^{(S_{d,d'})}(M_{d,d'}/F, s) = L^{(S_{d,d'})}(\lambda_{d,d'}^a\overline{\lambda}_{d,d'}^{n-a}, s).
\]
\end{cor}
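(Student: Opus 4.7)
The plan is to reduce the claimed global $L$-function identity to a comparison of local $L$-factors at each prime of good reduction, and then to match the motivic factor against the Hecke factor using the set-theoretic bijection of Corollary~\ref{equality of setsgen} and, when $a<n$, the complex conjugation action from Proposition~\ref{complex conjugation action}.

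First I would fix a prime $\p$ of $F$ outside $S_{d,d'}$. By Proposition~\ref{localLfactor2gen}(2), the local $L$-factor of $M_{d,d'}/F$ at $\p$ is
\[
\prod_{c,c'}\bigl(1 - Z_{(c,c')}^a Z_{-(c,c')}^{n-a}\, p^{-s\ord_{f_d e_{d'}} q}\bigr)^{-1},
\]
while grouping the local factors of $\lambda_{d,d'}^a\overline{\lambda}_{d,d'}^{n-a}$ by the primes $\p'$ of $F_{d,d'}$ above $\p$ as in \eqref{LfnOfPsigen} yields
\[
\prod_{\p'|\p}\bigl(1 - \lambda_{d,d'}(\p')^a \overline{\lambda}_{d,d'}(\p')^{n-a}\, p^{-s\ord_{f_d e_{d'}} p}\bigr)^{-1}.
\]
It therefore suffices to prove these two finite products agree.

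The matching is supplied by Corollary~\ref{equality of setsgen}: with $m = \ord_{f_d e_{d'}} p/\ord_{f_d e_{d'}} q$, the set $\{Z_{(c,c')}\}$ coincides with $\{\zeta_m\sqrt[m]{\lambda_{d,d'}(\p')} : \p'|\p,\zeta_m\in\mu_m\}$, so the $(c,c')$-indices group naturally into packets of size $m$, one packet per prime $\p'|\p$. For the case $a=n$, I would collapse the $m$ motivic factors attached to each $\p'$ using the cyclotomic identity $\prod_{\zeta\in\mu_m}(1-\zeta X)=1-X^m$ (with suitable reindexing by $\zeta\mapsto\zeta^n$). This produces the single Hecke factor $1-\lambda_{d,d'}(\p')^n p^{-s\ord_{f_d e_{d'}}p}$ attached to $\p'$, and multiplying over $\p'|\p$ completes the case.

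For $n/2<a<n$, the hypothesis that $F$ is totally real enters through Proposition~\ref{complex conjugation action}, which gives $Z_{-b} = \overline{Z_b}$, so that $Z_b^a Z_{-b}^{n-a} = Z_b^a\overline{Z_b}^{n-a}$. Combined with the second assertion of Corollary~\ref{equality of setsgen} (that $Z_b^m = \lambda_{d,d'}(\p')$ forces $Z_{-b}^m = \overline{\lambda}_{d,d'}(\p')$), a parallel $\mu_m$-collapsing argument run simultaneously on the $\lambda$ and $\overline{\lambda}$ parts of each factor yields the required $1-\lambda_{d,d'}(\p')^a\overline{\lambda}_{d,d'}(\p')^{n-a}p^{-s\ord_{f_d e_{d'}}p}$. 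The main obstacle, and the step deserving the most care, is this collapsing: one must verify that the bijection of Corollary~\ref{equality of setsgen} groups the $Z_{(c,c')}$ compatibly with the $\mu_m$-product identity (tracking factors of $\gcd(m,n)$ versus $\gcd(m,2a-n)$ as needed), and that the complex-conjugation twist between $Z_b$ and $Z_{-b}$ propagates correctly through to the Hecke side when $a\ne n$.
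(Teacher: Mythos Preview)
Your approach is exactly the paper's, whose proof reads in full ``This follows from Corollary~\ref{equality of setsgen} and Proposition~\ref{localLfactor2gen}.'' The $\mu_m$-collapsing you flag as delicate is in fact vacuous: since Frobenius at $\p$ in $\Gal(F_{d,d'}/F)\cong(\Z/f_d e_{d'}\Z)^\times$ is the class of $q$ (not $p$), the residue degree of $\p'/\p$ is $\ord_{f_d e_{d'}} q$, so the paper's $m$ equals $1$ and the bijection of Corollary~\ref{equality of setsgen} pairs cycles with primes $\p'\mid\p$ one-to-one, making your $\gcd$ concerns moot and the matching immediate.
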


\begin{proof}
This follows from Corollary \ref{equality of setsgen} and Proposition \ref{localLfactor2gen}.
\end{proof}

Therefore using the decomposition of $\tilde{M}$ given in \eqref{definition of M} and letting $S\coloneqq \cup_{d, d'} S_{d,d'}$, we have proven the following theorem.

\begin{thm}\label{big theorem} Let $n\ge 1$ and $n/2<a\le n$, where we require $a=n$ if $F$ is not totally real.  Then we have an equality of (incomplete) $L$-functions:
\[
L^{(S)}(\tilde{M}/F, s) = \prod_{d, d'}L^{(S)}(\lambda_{d,d'}^a\overline{\lambda}_{d,d'}^{n-a}, s).
\]
\end{thm}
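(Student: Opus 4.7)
The plan is to deduce this theorem directly from the decomposition of $\tilde{M}$ given in \eqref{definition of M} together with Corollary \ref{Lfn equality}, which already handles each piece $M_{d,d'}$ individually. The work has essentially all been done; what remains is a bookkeeping argument about how $L$-functions behave under direct sums of motives and under enlarging the finite set of excluded primes.

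First I would invoke \eqref{definition of M} to write $\tilde{M} = \bigoplus_{d,d'} M_{d,d'}$ in $\M_F^{\rat}$. Applying the $\ell$-adic realization functor $H^*_{\et}(\cdot, \Q_\ell)$ yields a corresponding direct sum decomposition of $G_F$-representations. Since each summand $H^*_{\et}(M_{d,d'}, \Q_\ell)$ is a compatible system of $G_F$-representations by Proposition \ref{localLfactor2gen}(1), the direct sum is too. Moreover, for any prime $\p \notin S$, all the summands are unramified at $\p$, and the characteristic polynomial of $\Frob_\p^*$ acting on the direct sum is the product of the characteristic polynomials on each summand. Translating this into local $L$-factors via Definition \ref{local Lfactor variety} gives
\[
L_\p(\tilde{M}/F, s) = \prod_{d, d'} L_\p(M_{d,d'}/F, s)
\]
for every $\p \notin S$.

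Next I would handle the mismatch between $S$ and the smaller sets $S_{d,d'}$ appearing in Corollary \ref{Lfn equality}. By definition $S = \bigcup_{d,d'} S_{d,d'}$, so for any fixed $(d,d')$ we have $S_{d,d'} \subseteq S$, which means that the incomplete $L$-function $L^{(S)}(M_{d,d'}/F, s)$ is obtained from $L^{(S_{d,d'})}(M_{d,d'}/F, s)$ by omitting the additional Euler factors at primes in $S \setminus S_{d,d'}$, and likewise for the Hecke-character side. Hence the identity of Corollary \ref{Lfn equality} remains valid after passing from $S_{d,d'}$ to $S$, giving $L^{(S)}(M_{d,d'}/F, s) = L^{(S)}(\lambda_{d,d'}^a\overline{\lambda}_{d,d'}^{n-a}, s)$ under the standing hypothesis that $a = n$ when $F$ is not totally real.

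Finally I would combine the two displayed equalities. Taking the Euler product over $\p \notin S$ of the factorization of $L_\p(\tilde{M}/F, s)$ yields $L^{(S)}(\tilde{M}/F, s) = \prod_{d,d'} L^{(S)}(M_{d,d'}/F, s)$, and substituting the Hecke-character expression for each factor gives the desired identity. There is no serious obstacle here; the only point that deserves a line of verification is that the pairs $(d, d')$ with $d = f$ or $d' = e$ (which are excluded from Proposition \ref{localLfactor2gen}) contribute trivial factors to $\tilde{M}$, since the corresponding summands of $\Jac C$ are zero-dimensional, so they do not affect either side.
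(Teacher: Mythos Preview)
Your proposal is correct and follows exactly the approach the paper takes: the theorem is stated immediately after Corollary \ref{Lfn equality} with the one-line justification ``using the decomposition of $\tilde{M}$ given in \eqref{definition of M} and letting $S\coloneqq \cup_{d, d'} S_{d,d'}$,'' and you have simply unpacked that sentence into the underlying bookkeeping about direct sums and enlarging the excluded set. One small remark: your final caveat about pairs with $d=f$ or $d'=e$ is unnecessary, since by the standing convention in Section \ref{betti cohomology of curve} the symbol $(d,d')$ always ranges only over \emph{proper} divisors of $f$ and $e$, so such pairs never appear in \eqref{definition of M} or in Proposition \ref{localLfactor2gen} to begin with.
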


\subsection{Relationship to other results in the literature}\label{literature}
We now briefly discuss how the motives constructed in Section \ref{group action} are related to other constructions of motives and varieties in the literature.  

\subsubsection{Standard motives of algebraic Hecke characters}\label{Schappacher comparison}
Given a number field $k$ and an algebraic Hecke character $\chi \colon \A_k^\times/k^\times \to \C^\times$, there is a standard way to construct a motive $M(\chi) \in \M_k^{\num}$ such that $L^{(S)}(M(\chi)/k, s) = L^{(S)}(\chi, s)$ for some finite set of places $S$ of $k$ \cite[\S I.4]{Schappacher}.  In fact, let $\M_k^{\num, \ab}$ be the Tannakian subcategory of $\mathcal{M}_k^{\num}$ generated by motives of abelian varieties and Artin motives over $k$.  Then $M(\chi)$ is characterized in the category $\M_k^{\num, \ab}$ by its $L$-function  \cite[Theorem I.5.1]{Schappacher}.  Note that the construction of the Chow motive $M_{d,d'}$ yields that the numerical realization of $M_{d,d'}$ lies in $\M_F^{\num, \ab}$.  Therefore by Corollary \ref{Lfn equality} and \cite[Theorem I.5.1]{Schappacher} we have 
\[
M_{d,d'} \times_F F_{d,d'} \cong M(\lambda_{d,d'}^a\overline{\lambda}_{d,d'}^{n-a}) \in \M_{F_{d,d'}}^{\num, \ab}.
\]
That is, our construction proves that the motives $M(\lambda_{d,d'}^a\overline{\lambda}_{d,d'}^{n-a}) \in \M_{F_{d,d'}}^{\num}$ descend to $F$ and can be realized at the level of Chow motives.

\subsubsection{Motives of CM modular forms}\label{Scholl comparison}
Suppose $F=\Q$, $K$ is an imaginary quadratic field, and $\chi$ is an algebraic Hecke character of $K$. Then one can form the theta series $\theta(\chi)$, which is a modular eigenform.  Scholl gave a construction attaching a homological (or \textit{Grothendieck}) motive over $\Q$ to any given eigenform \cite{Scholl1990}.  In the very limited circumstances when $F=\Q$ and $[K_{d,d'}:\Q] = 2$, our construction proves that Scholl's motives attached to $\theta(\lambda_{d,d'}^{a}\overline{\lambda}_{d,d'}^{n-a})$ can be realized at the level of Chow motives.

\subsubsection{Modularity of Schreieder's varieties}\label{Schreieder comparison}
The original motivation for this project was to prove that the varieties constructed by Schreieder in \cite{schreieder} are modular in a sense similar to \cite[Theorem 3.3]{cynk}.  Fix a positive integer $k\ge 1$ and let $e=2$, $f=3^k$, $\gamma=\delta=1$ in the Weil curve $C$ introduced in Section \ref{curves and jacobians}.  That is, $C$ has affine coordinate patches $y^2=x^{3^k}+1$, $v^2=u^{3^k+1}+u$ and genus $g=\frac{3^k-1}{2}$. The particular case of the group $G$ acting on $C^n$ was considered for $k=1$ and $a=n$ by Cynk-Hulek in \cite[Section 3]{cynk} and for general $k$ and $a$ by Schreieder in \cite[Section 8]{schreieder}.  In these papers they construct, for each $k\ge 1$, a smooth model $X$ of the singular quotient variety $C^n/G$ with the property:

\begin{prop}\cite[Theorem 17]{schreieder} For any $k\ge 1$ and $n/2 < a\le n$, the Betti cohomology of the $n$-dimensional variety $X$ is of the form
\[H_B^*(X,\C)=H_B^*(C^n,\C)^G\oplus \left( \bigoplus_{p=0}^n V^{p,p}\right),\]
where all classes in $V^{p,p}$ are algebraic. 
\end{prop}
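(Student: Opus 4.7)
The plan is to split $H^*_B(X,\C)$ into two parts: the cohomology pulled back from the singular quotient $C^n/G$, and the classes added by the resolution map $X \to C^n/G$. Since $G$ is a finite group acting on the smooth projective variety $C^n$, the standard transfer argument gives $H^*_B(C^n/G, \Q) = H^*_B(C^n, \Q)^G$, which accounts for the first summand in the desired decomposition verbatim. Once the extra classes contributed by the resolution are shown to be algebraic of type $(p,p)$, the proposition follows.

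For the contribution from the resolution I would argue as follows. The automorphisms $\psi_e$ and $\psi_f$ have only finitely many fixed points on $C$ (the ramification points of $C \to \mathbb{P}^1$), so every point stabilizer for the $G$-action on $C^n$ is abelian and $C^n/G$ has abelian quotient singularities. One then constructs $X$ by iteratively blowing up smooth centers, using a toric resolution adapted to these cyclic quotient singularities. At each step the smooth blowup formula
\[
H^k_B(\widetilde Y, \Q) \cong H^k_B(Y, \Q) \oplus \bigoplus_{i=1}^{c-1} H^{k-2i}_B(Z, \Q)(-i),
\]
for a smooth center $Z \subset Y$ of codimension $c$, shows that the new classes are pulled back from the cohomology of $Z$ with a Tate twist. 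Arguing inductively on the stages of the resolution, the $Z$'s that appear are (products of) toric varieties or projective bundles over such, whose cohomology is generated entirely by algebraic classes of pure Hodge type $(p,p)$. Summing these contributions gives the extra summand $\bigoplus_{p=0}^n V^{p,p}$, automatically consisting of algebraic classes.

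The hardest part will be the combinatorial analysis of the singular locus of $C^n/G$: one must stratify $C^n$ by the isotropy subgroups of $G$, verify that the strata admit a compatible sequence of smooth toric blowups, and confirm that at each stage the blowup center has cohomology of pure Hodge type $(p,p)$. Once that combinatorial bookkeeping is completed, the decomposition and the algebraicity statement fall out simultaneously from the blowup formula. This is precisely where the special form $e=2$, $f=3^k$ enters: it is what makes the $G$-orbit structure on $C^n$ tractable enough to write down an explicit smooth model, as in Schreieder's construction.
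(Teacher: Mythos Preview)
The paper does not give its own proof of this proposition; it is simply quoted from Schreieder's paper with the citation \cite[Theorem 17]{schreieder}.  So there is no argument in the present paper to compare your proposal against, and your sketch should really be measured against Schreieder's construction.

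Your overall strategy is the right one, and the transfer identification $H^*_B(C^n/G,\Q)\cong H^*_B(C^n,\Q)^G$ is fine since $G$ is finite.  The gap is in the second half.  The blowup formula you invoke,
\[
H^k_B(\widetilde Y,\Q)\cong H^k_B(Y,\Q)\oplus\bigoplus_{i=1}^{c-1}H^{k-2i}_B(Z,\Q)(-i),
\]
requires $Y$ to be \emph{smooth}, whereas your starting point $C^n/G$ is singular.  Blowing up a smooth center inside a singular variety does not give this clean decomposition, so you cannot bootstrap the algebraicity of the new classes in the way you describe.  One can salvage the splitting $H^*(C^n/G,\Q)\hookrightarrow H^*(X,\Q)$ as a direct summand via the decomposition theorem (using that quotient singularities are rational, so $IC_{C^n/G}=\Q_{C^n/G}[n]$), but then you still owe a separate argument that the complementary summand is spanned by algebraic $(p,p)$-classes, and that does not follow formally from your sketch.

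Schreieder's actual construction avoids this issue by building $X$ inductively on $n$: roughly, $X_{m+1}$ is produced from $X_m\times C$ by quotienting by a single cyclic group and explicitly resolving the resulting (very concrete) singularities, rather than resolving $C^n/G$ in one blow.  At each inductive step the ambient variety is already smooth when the blowups are performed, so the blowup formula applies legitimately, and the centers are simple enough that their cohomology is visibly algebraic.  This is where the hypothesis $e=2$, $f=3^k$ is genuinely used, as you suspected, but the mechanism is this inductive tower rather than a global toric resolution of $C^n/G$.
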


In other words, the ``transcendental part" of $X$, which we denote $T(X)$, is equal to our motive $\tilde{M}$ in this case.  Write $A_i$ for the isotypic $3^{k-i-1}$-dimensional abelian variety defined over $\Q$ appearing in the decomposition of $(\Jac C)_{K_0}$ that obtains CM by $K_i = \Q(\zeta_{3^k}^{3^i})$ when base changed to $K_i$.  Let $S_i$ denote the finite set of primes of $\Q$ where $A_i$ has bad reduction, and let $\lambda_i$ be the Hecke character of $K_i$ associated to $A_i$ by Weil.  We obtain the following corollary.

\begin{cor}\label{geometric modularity}
For any choices of $k \ge 1$ and $n\ge 1$ and $n/2<a\le n$,  the $n$-dimensional smooth projective variety $X$ is modular.  That is, we have an equality of (incomplete) $L$-functions
\[
L^{(S)}(T(h(X))/\Q, s) = \prod_{i=1}^{k-1}L^{(S)}(\lambda_i^a\overline{\lambda}^{n-a}_i, s).
\]
\end{cor}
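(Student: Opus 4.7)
The plan is to reduce the statement to Theorem \ref{big theorem} once the transcendental part $T(h(X))$ is identified with the motive $\tilde{M}$ constructed in Section \ref{group action}. First, I would identify the $\ell$-adic realization of $T(h(X))$ with that of $\tilde{M}$. Schreieder's proposition gives the Hodge decomposition
\[
H^*_B(X, \C) = H^*_B(C^n, \C)^G \oplus \bigoplus_{p = 0}^{n} V^{p, p},
\]
in which every class in the second summand is algebraic. By Proposition \ref{two pieces of cohomologygen}, the first summand splits further as $V^{a, n-a} \oplus V^{n-a, a}$ together with additional $(p, p)$-pieces spanned by monomials in the fundamental classes $\Omega_k$, which are algebraic as well. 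Combining these observations, every $(p, p)$-class in $H^*_B(X, \C)$ is algebraic, and the transcendental part $T(h(X))$ has Betti realization exactly $V^{a, n-a} \oplus V^{n-a, a}$. By Corollary \ref{betti motive basis}, this matches the Betti realization of $\tilde{M}$. Since the Betti-\'etale comparison isomorphisms are Galois equivariant and the algebraic classes contribute only Tate-twisted Euler factors, I obtain $L^{(S)}(T(h(X))/\Q, s) = L^{(S)}(\tilde{M}/\Q, s)$.

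Second, I would apply Theorem \ref{big theorem} to $\tilde{M}$. Since $F = \Q$ is totally real, arbitrary $n/2 < a \le n$ is permitted and
\[
L^{(S)}(\tilde{M}/\Q, s) = \prod_{d, d'} L^{(S)}(\lambda_{d, d'}^{a} \overline{\lambda}_{d, d'}^{n - a}, s).
\]
With $e = 2$, the only proper divisor $d' \mid e$ with $d' \ne e$ is $d' = 1$; the proper divisors of $f = 3^k$ are $d = 3^i$ for $0 \le i \le k - 1$. For each such pair, one has $K_{d, d'} = \Q(\zeta_{3^{k-i}}) = K_i$, and the corresponding isotypic factor $A_{d, d'}$ and Hecke character $\lambda_{d, d'}$ agree with $A_i$ and $\lambda_i$ of the statement. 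Reindexing the product over $i$ then yields the desired formula.

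The main obstacle will be the first step: promoting Schreieder's Hodge-theoretic decomposition of $H^*_B(X, \C)$ to an equality of $\ell$-adic Galois representations between the transcendental part of $h(X)$ and $\tilde{M}$. The subtlety is that $X$ is constructed as a smooth resolution of $C^n/G$, and one needs to verify both that the exceptional divisors contribute only algebraic classes (hence only Tate-twist Euler factors outside a finite set $S$) and that the decomposition of the remaining Galois representation respects the idempotent $\sum_{d,d'} E_{d, d'} \circ \varepsilon$ defining $\tilde{M}$. Once this $\ell$-adic identification is in place, the remainder of the argument is essentially bookkeeping about the indexing of the product.
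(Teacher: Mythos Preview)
Your approach is essentially the same as the paper's: identify $T(h(X))$ with $\tilde{M}$ via Schreieder's result and then invoke Theorem~\ref{big theorem} together with the explicit decomposition of $\Jac C$. The paper's proof is terser---it simply takes the identification $T(h(X))=\tilde{M}$ as read from the discussion immediately preceding the corollary and cites Theorem~\ref{big theorem} and Proposition~\ref{decomposition of Jacobian gen}---whereas you spell out the Betti calculation (via Proposition~\ref{two pieces of cohomologygen} and Corollary~\ref{betti motive basis}) and flag the passage to $\ell$-adic realizations as the nontrivial step. That concern is legitimate, but the paper does not address it beyond the sentence ``the `transcendental part' of $X$\ldots is equal to our motive $\tilde{M}$''; your more cautious formulation is, if anything, an improvement in rigor rather than a different method.

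One minor bookkeeping point: you correctly count $k$ proper divisors $d=3^i$, $0\le i\le k-1$, but the displayed product in the corollary runs over $i=1,\dots,k-1$, i.e.\ $k-1$ terms. This is almost certainly an indexing slip in the statement (it should read $\prod_{i=0}^{k-1}$ to match the $k$ isotypic factors of $\Jac C$), so your reindexing is right in substance even if it does not literally reproduce the printed bounds.
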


\begin{proof}
The particular decomposition of the Jacobian of $C$ is given by Corollary \ref{decomposition of Jacobian gen} and the result then follows from Theorem \ref{big theorem}.
\end{proof}

In particular, when $n$ is odd we obtain
\[
L^{(S)}(H^n_{\et}(X_{\overline{\Q}}), s) = \prod_{i = 1}^{k - 1}L^{(S)}(\lambda_i^a\overline{\lambda}_i^{n-a}, s).
\]
Corollary \ref{geometric modularity} was proved by Cynk and Hulek when $C$ is the elliptic curve $y^2 = x^3 + \delta$ ($\delta \in \Q$) and $a=n$ \cite[Theorem 3.3]{cynk}.

\textbf{Acknowledgements.}  The authors would like to thank the following people for helpful discussions in the preparation of this article: Don Blasius, Ashay Burungale, Francesc Castella, G\"unter Harder, Haruzo Hida, Matt Kerr, C\'edric P\'epin, Stefan Schreieder, Jacques Tilouine, Burt Totaro, Alberto Vezzani, Preston Wake, J\"org Wildeshaus.

The authors gratefully acknowledge support of the National Science Foundation through awards DGE-1144087 and DMS-1645877 (first author) and DMS-1604148 (second author).  The second author also gratefully acknowledges support from the Franco-American Fulbright Commission and the Max Planck Institute for Mathematics.

\normalsize{\bibliography{Modularity.bib}}

\begin{thebibliography}{dBnRNA98}

\bibitem[And04]{Andre}
Yves Andr\'e.
\newblock {\em Une introduction aux motifs (motifs purs, motifs mixtes,
  p\'eriodes)}, volume~17 of {\em Panoramas et Synth\`eses [Panoramas and
  Syntheses]}.
\newblock Soci\'et\'e Math\'ematique de France, Paris, 2004.

\bibitem[CH07]{cynk}
S.~Cynk and K.~Hulek.
\newblock Higher-dimensional modular {C}alabi-{Y}au manifolds.
\newblock {\em Canad. Math. Bull.}, 50(4):486--503, 2007.

\bibitem[dBnRNA98]{Rollin-Aznar}
Sebastian del Ba\~no Rollin and Vincente Navarro~Aznar.
\newblock On the motive of a quotient variety.
\newblock {\em Collect. Math.}, 49(2-3):203--226, 1998.

\bibitem[Har77]{hartshorne}
Robin Hartshorne.
\newblock {\em Algebraic geometry}.
\newblock Springer-Verlag, New York-Heidelberg, 1977.
\newblock Graduate Texts in Mathematics, No. 52.

\bibitem[NN13]{najnudel}
Joseph Najnudel and Ashkan Nikeghbali.
\newblock The distribution of eigenvalues of randomized permutation matrices.
\newblock {\em Ann. Inst. Fourier (Grenoble)}, 63(3):773--838, 2013.

\bibitem[Sch88]{Schappacher}
Norbert Schappacher.
\newblock {\em Periods of {H}ecke characters}, volume 1301 of {\em Lecture
  Notes in Mathematics}.
\newblock Springer-Verlag, Berlin, 1988.

\bibitem[Sch90]{Scholl1990}
A.~J. Scholl.
\newblock Motives for modular forms.
\newblock {\em Invent. Math.}, 100(2):419--430, 1990.

\bibitem[Sch94]{Scholl}
A.~J. Scholl.
\newblock Classical motives.
\newblock In {\em Motives ({S}eattle, {WA}, 1991)}, volume~55 of {\em Proc.
  Sympos. Pure Math.}, pages 163--187. Amer. Math. Soc., Provience, RI, 1994.

\bibitem[Sch15]{schreieder}
S.~Schreieder.
\newblock On the construction problem for {H}odge numbers.
\newblock {\em Geom. Topol.}, 19(1):295--342, 2015.

\bibitem[Shi71]{Shimura}
Goro Shimura.
\newblock On the zeta-function of an abelian variety with complex
  multiplication.
\newblock {\em Ann. of Math. (2)}, 94:504--533, 1971.

\bibitem[ST61]{shimura-taniyama}
Goro Shimura and Yutaka Taniyama.
\newblock {\em Complex multiplication of abelian varieties and its applications
  to number theory}, volume~6 of {\em Publications of the Mathematical Society
  of Japan}.
\newblock The Mathematical Society of Japan, Tokyo, 1961.

\bibitem[ST68]{Serre-Tate}
Jean-Pierre Serre and John Tate.
\newblock Good reduction of abelian varieties.
\newblock {\em Ann. of Math. (2)}, 88:492--517, 1968.

\bibitem[Was97]{washington}
Lawrence~C. Washington.
\newblock {\em Introduction to cyclotomic fields}, volume~83 of {\em Graduate
  Texts in Mathematics}.
\newblock Springer-Verlag, New York, second edition, 1997.

\bibitem[Wei52]{Weil}
Andr\'e Weil.
\newblock Jacobi sums as ``{G}r\"ossencharaktere''.
\newblock {\em Trans. Amer. Math. Soc.}, 73:487--495, 1952.

\end{thebibliography}
\bibliographystyle{alpha}

\end{document}